\def\dis{\displaystyle}
\theoremstyle{definition}
\newtheorem{Theorem}{\bf Theorem}[section]
\newtheorem{Lemma}[Theorem]{\bf Lemma} 
\newtheorem{Proposition}[Theorem]{\bf Proposition} 
\newtheorem{Corollary}[Theorem]{\bf Corollary}
\newtheorem{Remark}[Theorem]{\bf Remark}
\newtheorem{Example}[Theorem]{\bf Example}
\newtheorem{Definition}[Theorem]{\bf Definition}
\newenvironment{thm}{\begin{Theorem}}{\end{Theorem}}
\newenvironment{lem}{\begin{Lemma}}{\end{Lemma}}
\newenvironment{prop}{\begin{Proposition}}{\end{Proposition}}
\newenvironment{rem}{\begin{Remark}}{\end{Remark}}
\newenvironment{dfn}{\begin{Definition}}{\end{Definition}}
\def\tr{\mathop{\operator@font tr}\nolimits}  
\def\dist{\mathop{\operator@font dist}\nolimits}  
\def\div{\mathop{\operator@font div}\nolimits}  
\def\exp{\mathop{\operator@font exp}\nolimits}  
\def\essinf{\mathop{\operator@font {\it ess}.\inf}\nolimits}  
\def\esssup{\mathop{\operator@font {\it ess.}\sup}\nolimits}  
\newcommand{\R}{\mathbb{R}}
\newcommand{\N}{\mathbb{N}}
\def\P{\mathcal{P}}
\def\O{\Omega}
\def\OO{\overline{\Omega}}
\def\L{\Lambda}
\def\a{\alpha}
\def\b{\beta}
\def\phi{\varphi}
\def\e{\varepsilon}
\def\d{\delta}
\def\s{\sigma}
\def\l{\lambda}
\def\o{\omega}
\def\p{\partial}
\def\fr{\frac}
\def\ti{\times}
\def\ra{\rangle}
\def\la{\langle}
\def\le{\left}
\def\ri{\right}
\def\fr{\frac}
\def\dis{\displaystyle}
\def\g{\gamma}
\def\ul{\underline}
\def\ol{\overline}
\def\Swiech{\'Swi\c{e}ch}
\begin{document}
\def\ns{\normalsize}

\title{On $L^p$-viscosity solutions of bilateral 
obstacle problems with unbounded ingredients}
\author{
\begin{tabular}{ccc}
\begin{tabular}{c}
Shigeaki Koike\footnote{Supported in part by Grant-in-Aid for Scientific Research (No. 16H06339, 
16H03948,  
16H03946
) of JSPS, e-mail: skoike@waseda.jp}\\
{\ns Deapartment of Applied Physics}\\
{\ns Waseda University}\\
{\ns Tokyo, 169-8555}\\
{\ns Japan}
\end{tabular}
&\& &\begin{tabular}{c}
Shota Tateyama\footnote{Supported by Grant-in-Aid for JSPS Research 
Fellow (No. 16J02399), and for Scientific Research (No. 16H06339), 
e-mail: shota.tateyama@gmail.com}\\
{\ns Department of Mathematics}\\
{\ns Waseda University}\\
{\ns Tokyo, 169-8555}\\
{\ns Japan}
\end{tabular}
\end{tabular}
}
\date{}

\pagenumbering{roman}
\maketitle

\pagenumbering{arabic}

\begin{abstract}
The global equi-continuity estimate on $L^p$-viscosity solutions of  bilateral obstacle problems with unbounded ingredients is established when  obstacles are merely continuous. 
The existence of $L^p$-viscosity solutions 
is established via an approximation of given data. 
The local H\"older continuity estimate on the first derivative of 
$L^p$-viscosity solutions is shown when the obstacles belong to $C^{1,\b}$, and $p>n$.  
\end{abstract}

\section{Introduction}
\label{sec:intro}

In this paper, we consider the following bilateral obstacle problem 
\begin{equation}\label{eq:1}
\min\{\max\{F(x, Du, D^2u)-f, u-\psi\}, u-\varphi\}=0\quad\mbox{in } \O,
\end{equation}
under the Dirichlet condition $u=g$ on $\p \O$,
 where 
  $\O\subset\R^n$ is  a bounded domain,  $F$ is at least a measurable function on $\O\ti\R^n\ti S^n$, and $f$, $\varphi$, $\psi$ and $g$ are given.   
We denote by $S^n$ the set of all $n\ti n$ real-valued 
symmetric matrices with the standard order, and  set 
$$
S^n_{\l,\L}:=\{ X\in S^n \ | \ \l I\leq X\leq \L I\}\quad\mbox{for }0<\l\leq\L.
$$

In contrast, unilateral obstacle problems are described by 
Bellman equations
\begin{equation}\label{UO1}
\max\{ F(x,Du,D^2u)-f, u-\psi\}=0\quad\mbox{in }\O,
\end{equation}
or 
\begin{equation}\label{UO2}
\min\{ F(x,Du,D^2u)-f, u-\phi\}=0\quad\mbox{in }\O.
\end{equation}

In \cite{LS67}, Lions-Stampacchia first introduced unilateral obstacle problems as an example of variational inequalities. 
Then, in \cite{BS, LS69}, regularity of solutions of obstacle problems was studied by Brezis-Stampacchia and Lewy-Stampacchia. 
Afterwards,  there  appeared numerous researches on unilateral obstacle problems 
when $F$  are partial differential operators of  divergence form. 
We only refer to  \cite{F, KS,PSU} and references therein 
for the existence and regularity of solutions of obstacle problems and applications.  

When $F$ is a linear second-order uniformly elliptic operator 
with smooth coefficients in \eqref{UO1} or \eqref{UO2}, 
as a crucial regularity result of solutions (i.e. $W^{2,\infty}(\O)$) of unilateral obstacle problems, we 
 refer to \cite{J}. 
We also refer to \cite{L} for $W^{2,\infty}_{loc}(\O)$ regularity of solutions of \eqref{UO1} 
when $F$ is given by the maximum of a finite number of 
linear second-order uniformly elliptic operators  
with smooth coefficients.  
  
We also note that unilateral  obstacle problems arise in stochastic optimal stopping time problems. We refer to \cite{FS, To} and references therein  for this issue.

Going back to bilateral obstacle problems, we  refer to \cite{MR} and \cite{DMV}, respectively, for a nice review and a pioneering regularity result. 
As an application, we also refer to \cite{DY}.

We note that equation \eqref{eq:1} is  formally equivalent to the following problem:
$$
\le\{
\begin{array}{rl}
F(x,Du,D^2u)\leq f(x) &\mbox{in }\{ x\in\O \ | \ u(x)>\phi (x)\},\\
F(x,Du,D^2u)\geq f(x)&\mbox{in }\{ x\in\O \ | \ u(x)<\psi (x)\},\\
\phi\leq u\leq\psi&\mbox{in }\O.
\end{array}\ri.$$
Furthermore, we notice that 
\eqref{eq:1} can be regarded as the following  Isaacs equation
$$
\min_{\a\in [0,1]}\max_{\b\in [0,1]}\{ F_{\a,\b}(x,u,Du,D^2u)-f_{\a,\b}\} =0\quad\mbox{in }\O,
$$
where for two parameters $\a,\b\in [0,1]$,
$$
F_{\a,\b}(x,r,\xi,X)=\a\b F(x,\xi,X)+\a(1-\b)r +(1-\a)r$$
and
$$
f_{\a,\b}(x)=\a\b f(x)+\a(1-\b)\psi (x)+(1-\a)\phi (x),$$
because of the fact that for $A,B,C\in\R$, 
$$
A\wedge (B\vee C)=\min_{\a\in [0,1]}\max_{\b\in [0,1]}
\{ \a\b A+\a (1-\b)B+(1-\a)C\}.
$$
Here and later, we use the notations: for $a,b\in\R$, 
$$
a\vee b:=\max\{ a,b\}, \ a\wedge b:=\min\{ a,b\}, a^+:=a\vee 0\mbox{ and }a^-:=(-a)\vee 0.$$

On the other hand, we have few results when $F$ has 
non-divergence structure even for unilateral obstacle problems. 
Duque in \cite{D} recently showed  
  interior H\"older estimates on viscosity solutions of bilateral obstacle problems for fully nonlinear uniformly elliptic operators with no 
 variable coefficients, no first derivative terms and constant inhomogeneous terms but 
 only assuming that the obstacles are H\"older continuous; 
$$
\le\{\begin{array}{l}
F(x,\xi,X)=F(X)\mbox{ for }(x,\xi,X)\in \O\ti\R^n\ti S^n,\\
f\equiv C, \\
\phi,\psi\in C^\a(\O)\mbox{ for some }\a\in (0,1).
\end{array}\ri.
$$
Assuming  the above hypotheses, 
in \cite{D}, we obtain  the existence of viscosity solutions of \eqref{eq:1} 
under the Dirichlet condition, and  
interior H\"older estimates on the first derivative of 
viscosity solutions of \eqref{eq:1} when obstacles are in $C^{1,\b}$ 
for $\b\in (0,1)$. 
The results associated with parabolic problems are also shown in \cite{D}. 
We refer to \cite{LP, LPS} for very recent related topics, and to \cite{CLM} for a different approach 
via Tug-of-War games. 

Although a  clever use of the weak Harnack inequality was adapted 
to show those estimates in \cite{D}, 
in order to extend the results to more general $F$ and $f$, it seems difficult to 
establish the estimates near the free boundary and near $\p\O$. 

Our aim in this paper is to extend results in \cite{D} when $F$ is a fully nonlinear uniformly elliptic operator. 
More precisely, under more general hypotheses than those in \cite{D}, 
we show the equi-continuity of $L^p$-viscosity solutions of \eqref{eq:1} in $\OO$, 
the existence of $L^p$-viscosity solutions of \eqref{eq:1}, 
and 
their local H\"older continuity of derivatives 
under  additional assumptions.

For the corresponding results of parabolic obstacle problems,  we cannot use 
the argument in the proof of H\"older estimates on the derivative of $L^p$-viscosity solutions 
because the domain, where the infimum is taken, differs from that of the $L^{\e_0}$ 
(quasi)-norm in the weak Harnack inequality, 
which arises in Proposition \ref{prop:WH} for the elliptic case. 
The second author finds a new argument to avoid this difficulty. 
We refer to \cite{T} for the parabolic version of this paper.

For any $p>0$ and  $u:\O\to\R$, we denote the quasi-norm: 
$$
\| u\|_{L^p(\O)}=\le(\int_\O |u(x)|^p dx\ri)^{\fr1p}.$$
We note that $\|\cdot\|_{L^p(\O)}$ satisfies 
\begin{equation}\label{eq:tri}
\| u+v\|_{L^p(\O)}\leq C_p\le(\| u\|_{L^p(\O)}+\|v\|_{L^p(\O)}\ri) \quad \mbox{for some }C_p\geq 1,
\end{equation}
where  $C_p=1$ provided $p\geq 1$. 

This paper is organized as follows: 
In Section 2, we recall the definition of $L^p$-viscosity solutions, basic properties, and exhibit main results. 
Section 3 is devoted to the weak Harnack inequality  
both in $\O'\Subset\O$ and near $\p\O$, which 
yields the global equi-continuity of $L^p$-viscosity solutions. 
In Section 4, 
we establish the existence of $L^p$-viscosity solutions of \eqref{eq:1} when 
the obstacles are only continuous under appropriate hypotheses. 
We obtain H\"older estimates on the first derivative of 
$L^p$-viscosity solutions in Section 5. 

\begin{center}
Acknowledgements
\end{center}
The authors thank the referees for their careful reading,  and several valuable comments, which help us to improve the original manuscript.


\section{Preliminaries and main results}

For any $x\in \R^n$ and  $r>0$, we set
$$B_r:=\{y\in \R^n: |y|<r\},\quad \mbox{and}\quad B_r(x):=x+B_r.$$
For any measurable set $A\subset\R^n$, we denote by $|A|$ the Lebesgue measure of $A$. 

We recall the definition of $L^p$-viscosity solutions of general elliptic 
partial differential equations (PDE for short) from \cite{CCKS}:
\begin{equation}\label{GElliptic}
G(x,u,Du,D^2u)=0\quad\mbox{in }\O,
\end{equation}
where $G:\O\ti\R\ti\R^n\ti S^n\to \R$ is measurable.

\begin{dfn}
We call $u\in C(\O)$ an $L^p$-viscosity subsolution (resp., supersolution) 
of \eqref{GElliptic} 
if whenever $u-\eta$  attains its local maximum (resp., 
minimum) at $x_0\in \O$ for $\eta\in W^{2,p}_{loc}(\O)$, it follows that 
$$
\lim_{r\to 0} \ ess\inf_{B_r(x_0)} G(x,u(x),D\eta (x),D^2\eta (x))\leq 0$$
$$
\le(\mbox{resp., }\lim_{r\to 0} \ ess\sup_{B_r(x_0)} G(x,u(x),D\eta (x),D^2\eta (x))\geq 0\ri) .$$
We also call $u$ an $L^p$-viscosity solution of \eqref{GElliptic} if it is both an $L^p$-viscosity sub- and supersolution of \eqref{GElliptic}. 
\end{dfn}

\begin{rem}
We will call $C$-viscosity subsolutions (resp., supersolutions, solutions) if we replace $W^{2,p}_{loc}(\O)$ by $C^2(\O)$ in the above when given $G$ is continuous. 
We refer to \cite{CIL} for the theory of $C$-viscosity solutions. 
\end{rem}

In order to present our main results, we shall  prepare some notations and hypotheses.  
Throughout this paper, under the hypothesis 
\begin{equation}\label{Apq}
p_0<p\leq q, \quad q>n,
\end{equation}
where $p_0\in [\fr{n}{2}, n)$ is the constant in \cite{Es}, 
we suppose 
\begin{equation}\label{Af}
f\in L^p(\O).
\end{equation}

Concerning $F$, we suppose  that there exist constants $0<\l\leq\L$, and 
\begin{equation}\label{Amu}
\mu\in L^q(\O)
\end{equation}
such that 
\begin{equation}\label{UE}
\P^-_{\l, \L}
(X-Y)-\mu(x)|\xi-\eta|\leq F(x, \xi, X)-F(x,\eta,Y)\leq\P^+_{\l, \L}
(X-Y)+\mu(x)|\xi-\eta|
\end{equation}
for $x\in \O, \xi,\eta\in\R^n$, $X,Y\in S^n$, where $\P^\pm_{\l, \L}:S^n\to\R$ are defined by 
$$
\P^+_{\l, \L}(X):=\max\{-\mathrm{Tr}(AX): A\in S^n_{\l, \L} \}\quad\mbox{and}\quad 
\P^-_{\l, \L}(X):=-\P^+_{\l,\L}(-X)
$$
for $X\in S^n$. 
Since we fix $0<\l\leq\L$ in this paper,  we shall write $\P^\pm:=\P^\pm_{\l, \L}$ for simplicity. 
We also suppose that   
\begin{equation}\label{Azero}
 F(x,0,O)=0\quad\mbox{for }x\in \O .
\end{equation}
We notice that  \eqref{UE} and \eqref{Azero} yield
$$
\mu \geq 0\quad\mbox{in }\O.
$$

For obstacles $\phi,\psi$ and the Dirichlet datum 
$g$, as compatibility conditions, we suppose 
\begin{equation}\label{Aobstacles}
\phi\leq \psi\quad\mbox{in }\O,\quad\mbox{and}\quad\phi\leq g\leq \psi\quad\mbox{on }\p\O.
\end{equation}


\subsection{Basic properties}

We first give a direct consequence from the definition, which will be 
often used.

\begin{prop}\label{rem:1}
Assume \eqref{Apq}, \eqref{Af}, \eqref{Amu}, \eqref{UE}, \eqref{Azero} and \eqref{Aobstacles}. 
Let $u\in C(\O)$ be an $L^p$-viscosity subsolution 
(resp., supersolution) of $(\ref{eq:1})$. 
Assume that $\g\in \R$ satisfies $\g\geq\varphi$ (resp. $ \g\leq\psi$) in an open set $\O_0\subset\O$. Then, $u\vee \g$ (resp. $u\wedge \g$) is an $L^p$-viscosity subsolution (resp., supersolution) of 
$$
\P^-(D^2u)-\mu|Du|-f^+=0 \quad
\le(\mbox{resp., } \P^+(D^2u)+\mu|Du|+f^-=0\ri)
\quad \mbox{in } \O_0.
$$
\end{prop}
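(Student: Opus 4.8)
The plan is to unwind the $\min/\max$ structure of \eqref{eq:1} in the region where the one-sided obstacle condition on $\gamma$ holds, and then push the obstacle inside the test-function argument by a standard ``sup with a constant'' trick for $L^p$-viscosity solutions. I will treat the subsolution case; the supersolution case is entirely symmetric (replace $F$ by $\check F(x,\xi,X):=-F(x,-\xi,-X)$, which again satisfies \eqref{UE}, \eqref{Azero}, and swap $\varphi\leftrightarrow\psi$, $f^+\leftrightarrow f^-$).

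First I would record what it means for $u\in C(\O)$ to be an $L^p$-viscosity subsolution of \eqref{eq:1}: if $u-\eta$ has a local max at $x_0$ with $\eta\in W^{2,p}_{loc}(\O)$, then
$$
\lim_{r\to0}\ \essinf_{B_r(x_0)}\min\bigl\{\max\{F(x,D\eta(x),D^2\eta(x))-f(x),\,u(x)-\psi(x)\},\,u(x)-\varphi(x)\bigr\}\le0.
$$
Because $\min\{\max\{a,b\},c\}\le0$ forces $c\le0$ or $a\le0$ (as $b\le0$ would also give $\max\{a,b\}\le0\Rightarrow a\le0$ is not forced — so more carefully: if $c>0$ and $a>0$ then the whole expression is $>0$), the inequality at $x_0$ yields, for the $\essinf$, that at $x_0$ one has $u(x_0)-\varphi(x_0)\le0$ or $F(x_0,\cdot,\cdot)-f\le0$ in the essliminf sense. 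The key point is that $u-\varphi$ is continuous, so on the open set where $u>\varphi$ this first alternative is locally impossible, forcing the second; and combined with \eqref{UE}, \eqref{Azero} (which give $F(x,\xi,X)\le \P^+(D^2\eta)+\mu|D\eta|$ pointwise after subtracting $F(x,0,O)=0$) plus $f\le f^+$, one gets that $u$ is an $L^p$-viscosity subsolution of $\P^-(D^2u)-\mu|Du|-f^+\le0$ — wait, direction: we need the subsolution inequality $\P^-(D^2\eta)-\mu|D\eta|-f^+\le0$, which follows since $\P^-(D^2\eta)-\mu|D\eta|\le F(x,D\eta,D^2\eta)\le f\le f^+$ at points where the $F$-inequality holds.

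The genuine work is handling the set where $u=\varphi$ and passing to $v:=u\vee\gamma$. Here I would argue locally: fix $x_0\in\O_0$ and $\eta\in W^{2,p}_{loc}$ with $v-\eta$ having a local max at $x_0$. If $v(x_0)=\gamma>u(x_0)$, then near $x_0$ we have $v\ge\gamma>u(x_0)$ is not quite it — instead, since $v=u\vee\gamma$ and $\gamma$ is a constant, $v-\eta$ having a local max at $x_0$ with $v(x_0)=\gamma$ means $\gamma-\eta$ has a local max there relative to $v$, but $\gamma-\eta\le v-\eta$ with equality at $x_0$, so $\gamma-\eta$ also has a local max at $x_0$, i.e.\ $-\eta$ does, i.e.\ $D^2\eta,D\eta$ satisfy (via $W^{2,p}$ a.e.\ second-order behavior) $\P^-(D^2\eta)-\mu|D\eta|\le\P^-(D^2\eta)\le0\le f^+$ in the required essliminf sense — this uses that a $W^{2,p}_{loc}$ function with a local max has a.e.\ nonpositive ``generalized Hessian'' in the sense captured by $\essinf_{B_r}\P^-(D^2\eta)\le 0$; this is exactly the kind of fact underlying the theory in \cite{CCKS}. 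If instead $v(x_0)=u(x_0)$, then since $v\ge u$ everywhere, $u-\eta$ has a local max at $x_0$ too, and we invoke the previous paragraph together with $\gamma\ge\varphi$ on $\O_0$: the alternative $u(x_0)-\varphi(x_0)\le0$ combined with $u(x_0)=v(x_0)\ge\gamma\ge\varphi(x_0)$ gives $u(x_0)=\varphi(x_0)=\gamma$ — hmm, that's a degenerate overlap case; cleaner is to note $u(x_0)\ge\gamma\ge\varphi(x_0)$ always forces, when $u(x_0)-\varphi(x_0)\le 0$, the equality, which lands us back in the first case. So in every case the desired inequality holds at $x_0$.

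The main obstacle, and the step I would be most careful about, is the measure-theoretic claim that ``$w\in W^{2,p}_{loc}$ has a local max at $x_0$'' implies $\lim_{r\to0}\essinf_{B_r(x_0)}\bigl(\P^-(D^2w)-\mu|Dw|\bigr)\le0$, i.e.\ that the obstacle-side alternative really produces an $L^p$-viscosity subsolution inequality with the test function unchanged — this is where one must use that the composition with $\eta\in W^{2,p}_{loc}$ and the $\essinf$ in the definition interact correctly, and is precisely the lemma-type statement from \cite{CCKS} that makes $\max$ and $\min$ of $L^p$-viscosity subsolutions behave well. Everything else is bookkeeping on the elementary inequality $A\wedge(B\vee C)\le 0 \Leftrightarrow (A\le0 \text{ or } C\le 0) \text{ when } B$ is subordinate, together with continuity of $u-\varphi$ and $u-\gamma$ to localize onto the relevant open pieces of $\O_0$.
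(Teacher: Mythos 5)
Your proposal is correct and follows essentially the same route as the paper: split according to whether $u\vee\gamma$ touches the test function through $u$ (where $\gamma\geq\varphi$ and continuity let you drop the $\min$ with $u-\varphi$ and use \eqref{UE}, \eqref{Azero} to pass from $F-f$ to $\P^-(D^2\cdot)-\mu|D\cdot|-f^+$) or through the constant $\gamma$, where everything reduces to the fact that a constant is an $L^p$-viscosity subsolution of $\P^-(D^2u)=0$. The measure-theoretic step you flag as the crux is exactly what the paper also delegates to \cite{CCKS} (Proposition 2.9 there: $C$-viscosity subsolutions of continuous equations are $L^p$-viscosity subsolutions), so your appeal to that source closes the argument in the same way.
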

\begin{proof}
We only prove the assertion for subsolutions. 

For $\xi\in W^{2,p}_{loc}(\O_0)$, we suppose that 
$(u\vee \g) -\xi$ attains its local maximum at $x_0\in\O_0$. 

If we assume $u(x_0)>\g$, then $u-\xi$ attains its local maximum at $x_0\in\O_0$, and $u>\phi$ near $x_0$. 
Hence, by the definition, we have
$$
\lim_{r\to 0}ess\inf_{B_r(x_0)} \max\{ F(x,D\xi(x),D^2\xi(x))-f(x),(u-\psi)(x)\}\leq 0,$$
which yields the conclusion by \eqref{UE}. 

When $u(x_0)\leq \g$, it is enough to show that any constant is an $L^p$-viscosity subsolution of 
\begin{equation}\label{PucciEq}
\P^-(D^2u)=0\quad\mbox{in }\O .
\end{equation}
In fact, by noting that any constant is a $C$-viscosity subsolution of \eqref{PucciEq}, in view of Proposition 2.9 in \cite{CCKS}, 
it is also an $L^p$-viscosity subsolution of \eqref{PucciEq}. 
\end{proof}

We shall recall the scaled version of the weak Harnack inequality 
and the H\"older continuity in \cite{KS3}. 
Modifying the result in \cite{KS3} by an argument of the compactness, 
 we state the next proposition as simple as possible for later use. 
See \cite{KS3} for the original version. 
Here and later, we use the notation
$$
\a_0:=2-\fr{n}{p\wedge n}\in (0,1].
$$


\begin{prop}
\label{prop:WH} (cf. Theorem 4.5, 4.7, Corollary 4.8 in \cite{KS3}) 
Assume \eqref{Apq}, \eqref{Af} and \eqref{Amu}. 
There exist $\e_0>0$ and $C_0>0$ such that 
if $v\in C(B_{2r})$ is a nonnegative $L^p$-viscosity supersolution of 
\begin{equation}\label{eq:P+}
\P^+(D^2v)+\mu|Dv|-f=0\quad \mbox{in }B_{2 r},
\end{equation}
then it follows that 
$$\| v\|_{L^{\e_0}(B_r)}\leq C_0r^{\fr{n}{\e_0}}\le(\inf_{B_r} v+r^{\a_0}\|f\|_{L^{p\wedge n}(B_{2r})}\ri).$$
Here, $\e_0$ and $C_0$ depend on $n,\fr\L\l, p,q$ and $\|\mu\|_{L^q(B_{2 r})}$. 
\end{prop}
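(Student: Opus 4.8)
The plan is to reduce the assertion to the unit scale $r=1$ by rescaling, to quote the weak Harnack inequality of \cite{KS3} there, and then to fix the exponent $\e_0$ and the constant $C_0$ in the clean form stated by a compactness argument (this is the ``modification of \cite{KS3}'' alluded to above). For the rescaling I would first assume $0<r\leq 1$ — in the applications of this paper the radius stays in a bounded range, so letting the constants depend in addition on an upper bound for $r$ is harmless — put $w(y):=v(ry)$ for $y\in B_2$, and note from $Dw(y)=rDv(ry)$, $D^2w(y)=r^2D^2v(ry)$ and the positive $1$-homogeneity of $\P^+$ that $w$ is a nonnegative $L^p$-viscosity supersolution of
\[
\P^+(D^2w)+\tilde\mu\,|Dw|-\tilde f=0\quad\text{in }B_2,\qquad \tilde\mu(y):=r\mu(ry),\ \ \tilde f(y):=r^2f(ry).
\]
Changing variables gives the identities $\|\tilde\mu\|_{L^q(B_2)}=r^{1-n/q}\|\mu\|_{L^q(B_{2r})}\leq\|\mu\|_{L^q(B_{2r})}=:M$ (using $q>n$), $\|\tilde f\|_{L^{p\wedge n}(B_2)}=r^{\a_0}\|f\|_{L^{p\wedge n}(B_{2r})}$, $\inf_{B_1}w=\inf_{B_r}v$, and $\|w\|_{L^{\e_0}(B_1)}=r^{-n/\e_0}\|v\|_{L^{\e_0}(B_r)}$. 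Hence the proposition is equivalent to the unit-scale inequality $\|w\|_{L^{\e_0}(B_1)}\leq C_0\bigl(\inf_{B_1}w+\|\tilde f\|_{L^{p\wedge n}(B_2)}\bigr)$, with the factors $r^{n/\e_0}$ and $r^{\a_0}$ coming out automatically, and all the rescaled drifts $\tilde\mu$ arising this way satisfy $\|\tilde\mu\|_{L^q(B_2)}\leq M$.

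At the unit scale, Theorems 4.5, 4.7 and Corollary 4.8 of \cite{KS3} provide, for nonnegative $L^p$-viscosity supersolutions of $\P^+(D^2w)+\tilde\mu|Dw|-\tilde f=0$ in $B_2$, a weak Harnack inequality of the required shape together with an interior $C^{\a_0}$ estimate, with constants controlled by $n$, $\fr\L\l$, $p$, $q$ and $\|\tilde\mu\|_{L^q(B_2)}$. To put this into the stated form I would argue by compactness and contradiction. Since the estimate for one small $\e_0$ self-improves to every smaller exponent on the fixed ball $B_1$ (Hölder's inequality), it suffices to fix a single admissible $\e_0=\e_0(n,\fr\L\l,p,q,M)$; if no uniform $C_0$ existed for that $\e_0$, there would be $\tilde\mu_j$ with $\|\tilde\mu_j\|_{L^q(B_2)}\leq M$, data $\tilde f_j$, and nonnegative $L^p$-viscosity supersolutions $w_j$ with $\|w_j\|_{L^{\e_0}(B_1)}=1$ but $\inf_{B_1}w_j+\|\tilde f_j\|_{L^{p\wedge n}(B_2)}\to 0$. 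Bounding $w_j$ on compact subsets of $B_2$ via the \cite{KS3} weak Harnack (with its constant taken nondecreasing in $M$) and then applying the interior Hölder estimate of \cite{KS3}, one obtains local equicontinuity of $\{w_j\}$; passing to a subsequence, $w_j\to w_\infty$ locally uniformly, and by the stability of $L^p$-viscosity solutions under convergence of the ingredients (\cite{CCKS}, see also \cite{KS3}) the limit $w_\infty\geq 0$ is an $L^p$-viscosity supersolution of a limiting equation whose drift is bounded by $M$ in $L^q(B_2)$, with $\inf_{B_1}w_\infty=0$ and $\|w_\infty\|_{L^{\e_0}(B_1)}=1$; this contradicts the weak Harnack inequality of \cite{KS3} applied to $w_\infty$. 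This produces the uniform $\e_0$ and $C_0$, and with Step~1 completes the proof.

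The main obstacle I anticipate is the passage to the limit in the sequence $(\tilde\mu_j,\tilde f_j,w_j)$: one must verify that the $L^p$-viscosity supersolution property survives this limit — i.e.\ apply the stability theory of \cite{CCKS, KS3} when the drifts $\tilde\mu_j$ are only bounded in $L^q(B_2)$ (not strongly convergent a priori), extracting along a subsequence a legitimate limiting equation with drift still of $L^q$-norm at most $M$ — and that the interior Hölder/compactness estimates hold uniformly along the sequence. Both ingredients are essentially contained in \cite{KS3, CCKS}, but they must be invoked with care so that the limiting inequality still falls within the scope of the weak Harnack of \cite{KS3}. The rescaling identities of Step~1 and the self-improvement in the exponent $\e_0$ are routine.
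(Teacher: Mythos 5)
The paper does not actually prove Proposition \ref{prop:WH}: it is quoted from \cite{KS3} (Theorems 4.5, 4.7, Corollary 4.8), restated in the scaled form convenient for later use. Your Step 1 is precisely the routine reduction behind that restatement and is correct: with $w(y)=v(ry)$, $\tilde\mu(y)=r\mu(ry)$, $\tilde f(y)=r^2f(ry)$ one has $\|\tilde\mu\|_{L^q(B_2)}=r^{1-n/q}\|\mu\|_{L^q(B_{2r})}\leq\|\mu\|_{L^q(B_{2r})}$ (using $q>n$ and $r\leq 1$, a harmless restriction for the applications in this paper), $\|\tilde f\|_{L^{p\wedge n}(B_2)}=r^{\alpha_0}\|f\|_{L^{p\wedge n}(B_{2r})}$, and $\|w\|_{L^{\varepsilon_0}(B_1)}=r^{-n/\varepsilon_0}\|v\|_{L^{\varepsilon_0}(B_r)}$, so the unit-scale inequality is equivalent to the stated one.

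The genuine gap is in your compactness step. To extract a locally uniformly convergent subsequence from the normalized supersolutions $w_j$ you invoke the interior H\"older estimate of \cite{KS3} to get local equicontinuity; but that estimate (Proposition \ref{prop:Holder} here, Theorem 6.2 in \cite{KS3}) applies to functions that are simultaneously sub- and supersolutions. Nonnegative $L^p$-viscosity supersolutions of \eqref{eq:P+} satisfy no interior upper bound or modulus-of-continuity estimate: for $n\geq 3$ the truncated fundamental solutions $\min\{|x-x_0|^{2-n},k\}$, $x_0\in B_1$, are continuous nonnegative supersolutions of $\mathcal{P}^+(D^2w)=0$ whose $L^{\varepsilon_0}(B_1)$-norms stay bounded while their local suprema blow up, so a sequence normalized by $\|w_j\|_{L^{\varepsilon_0}(B_1)}=1$ need not be locally bounded, let alone equicontinuous, and Arzel\`a--Ascoli cannot be applied; the weak Harnack inequality only controls an $L^{\varepsilon_0}$ quantity, not the sup. (The passage to the limit in the equation is a second unresolved point, as you yourself note: with drifts $\tilde\mu_j$ merely bounded in $L^q$ the limiting inequality is not identified by the stability results of \cite{CCKS,KS3}, which require convergence of the measurable ingredients.) So the contradiction argument as designed does not go through. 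Fortunately it is also dispensable: after your rescaling you can cite the scaled weak Harnack of \cite{KS3} directly, whose $\varepsilon_0$ and constant already depend only on $n$, $\Lambda/\lambda$, $p$, $q$ and $\|\mu\|_{L^q}$ --- this direct quotation is exactly how the paper handles the proposition; any genuine uniformization of constants would have to be carried out inside the proof of the weak Harnack (at the level of its measure-theoretic growth lemmas), not by compactness on families of supersolutions.
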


In Section 5, we will use the following local maximum principle. 

\begin{prop}\label{prop:LMP}(cf. Theorem 3.1 in \cite{KS4}) 
Under hypotheses \eqref{Apq}, \eqref{Af}, \eqref{Amu}, for any $\e>0$, 
there exists $C_1=C_1\le(n,\l,\L,\e,p,q,\|\mu\|_{L^q(B_{2r})}\ri)>0$ such that 
if $u\in C(B_{2r})$ is a nonnegative $L^p$-viscosity subsolution  of 
$$
\P^-(D^2u)-\mu |Du|-f=0\quad \mbox{in }B_{2r},$$
then it follows that  
$$
\dis\sup_{B_{\fr r2}}u\leq C_1\le(r^{-\fr n\e}\| u\|_{L^\e(B_{r})}+ r^{\a_0}\| f^+\|_{L^{p\wedge n}(B_{2r})}\ri)
.$$
\end{prop}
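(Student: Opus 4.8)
The plan is to deduce the estimate from the unit-scale local maximum principle of \cite{KS4} by a dilation, tracking the powers of $r$ that uniform ellipticity, the drift $\mu$, and the right-hand side $f$ contribute. First I would put $\tilde u(y):=u(ry)$ for $y\in B_2$. Since the Pucci extremal operator $\P^-$ is positively $1$-homogeneous and $D^2\tilde u(y)=r^2D^2u(ry)$, $D\tilde u(y)=rDu(ry)$, a direct computation from the definition of an $L^p$-viscosity subsolution shows that $\tilde u$ is a nonnegative $L^p$-viscosity subsolution of
$$\P^-(D^2\tilde u)-\tilde\mu(y)|D\tilde u|-\tilde f(y)=0\quad\mbox{in }B_2,\qquad \tilde\mu(y):=r\mu(ry),\quad \tilde f(y):=r^2f(ry).$$
Note that $\|\tilde\mu\|_{L^q(B_2)}=r^{1-n/q}\|\mu\|_{L^q(B_{2r})}$; since $q>n$, this is controlled by $\|\mu\|_{L^q(B_{2r})}$ as long as $r$ stays in a bounded range, which is the case in every application of this proposition, so the constant produced by \cite{KS4} may be taken to depend only on $n,\l,\L,\e,p,q$ and $\|\mu\|_{L^q(B_{2r})}$ (if one insists on the dependence on $\mu$ being only through its $L^q$-norm, one inserts here the same compactness argument as for Proposition~\ref{prop:WH}).

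Applying Theorem~3.1 of \cite{KS4} to $\tilde u$ on $B_2$ gives a constant $C_1=C_1(n,\l,\L,\e,p,q,\|\mu\|_{L^q(B_{2r})})$ with
$$\sup_{B_{1/2}}\tilde u\leq C_1\le(\|\tilde u\|_{L^\e(B_1)}+\|\tilde f^+\|_{L^{p\wedge n}(B_2)}\ri).$$
It then remains to undo the scaling: $\sup_{B_{1/2}}\tilde u=\sup_{B_{r/2}}u$, $\|\tilde u\|_{L^\e(B_1)}=r^{-n/\e}\|u\|_{L^\e(B_r)}$, and, using $\a_0=2-\fr{n}{p\wedge n}$,
$$\|\tilde f^+\|_{L^{p\wedge n}(B_2)}=r^{\,2-\fr{n}{p\wedge n}}\|f^+\|_{L^{p\wedge n}(B_{2r})}=r^{\a_0}\|f^+\|_{L^{p\wedge n}(B_{2r})}.$$
Substituting these three identities into the previous display produces exactly the asserted inequality.

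For orientation I would also recall the structure of the unit-scale result of \cite{KS4}, since that is where the substance lies. It rests on two ingredients: (i) an Alexandrov--Bakelman--Pucci/Krylov--Safonov type measure estimate, valid with the critical Escauriaza exponent $p_0$ of \cite{Es} and with $\mu$ only in $L^q$, which bounds $\sup_{B_{1/2}}\tilde u$ by the $L^{\e_0}$ quasi-norm of $\tilde u$ on $B_1$ plus $\|\tilde f^+\|_{L^{p\wedge n}(B_2)}$ for one fixed small exponent $\e_0>0$; and (ii) an interpolation step that upgrades this to every $\e>0$ --- for $\e\geq\e_0$ it is immediate from H\"older's inequality on the bounded ball $B_1$, while for $0<\e<\e_0$ one applies the $\e_0$-estimate on all balls $B_t(x_0)\Subset B_1$, uses $\|\tilde u\|_{L^{\e_0}(B_t(x_0))}^{\e_0}\leq(\sup_{B_t(x_0)}\tilde u)^{\e_0-\e}\|\tilde u\|_{L^\e(B_t(x_0))}^{\e}$, and absorbs the supremum via Young's inequality and a standard iteration over the radii. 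The \emph{main obstacle} is exactly point (i): because $F$ is fully nonlinear and $\mu\in L^q$ is unbounded, the classical local maximum principle for linear equations with bounded coefficients does not apply, and one must use the scale-invariant ABP bound under $L^q$-integrable lower-order terms established in \cite{KS4}; granted that, the reduction above is routine.
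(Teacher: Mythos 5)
Your proposal is correct and is essentially what the paper intends: Proposition~\ref{prop:LMP} is stated with no proof beyond the citation of Theorem~3.1 in \cite{KS4}, the scaled form being exactly the dilation bookkeeping you carry out, and your exponents ($r^{-n/\e}$ for the $L^\e$ term and $r^{\a_0}=r^{2-\fr{n}{p\wedge n}}$ for the $f^+$ term) are all correct. Your side remark that $\|\tilde\mu\|_{L^q(B_2)}=r^{1-n/q}\|\mu\|_{L^q(B_{2r})}$ stays controlled only for bounded $r$ (with a compactness argument as for Proposition~\ref{prop:WH} if one wants the constant to depend on $\mu$ solely through its $L^q$-norm) matches the paper's own tacit convention, so nothing is missing.
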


Although  it is mentioned in Theorem 6.2 of \cite{KS3} that 
Proposition \ref{prop:WH} implies the H\"older continuity of $L^p$-viscosity solutions of 
\begin{equation}\label{eq:no_obstacle}
F(x,Du,D^2u)-f=0\quad\mbox{in }\O,
\end{equation}
to show a key idea of this paper, we recall how to derive  H\"older estimates on $L^p$-viscosity solutions of \eqref{eq:no_obstacle}.


\begin{prop}\label{prop:Holder}(cf. Theorem 6.2 in \cite{KS3})
Assume \eqref{Apq}, \eqref{Af}, \eqref{Amu},  \eqref{UE} and \eqref{Azero}. 
Let $\O:=B_{2R}$ for $R\in (0,1]$. 
Then, there exist  constants $K_1>0$ and $\hat\a\in (0,\a_0]$ such that 
if  $u\in C(B_{2R})$ is an $L^p$-viscosity 
solution of \eqref{eq:no_obstacle}, then it follows that
$$
|u(x)-u(y)|\leq K_1\le(\fr{|x-y|}{R}\ri)^{\hat\a}\le(\| u\|_{L^\infty (B_{2R})}+R^{\a_0}\| f\|_{L^{p\wedge n}(B_{2R})}\ri) \quad \mbox{for }x,y\in B_{R}
.$$
\end{prop}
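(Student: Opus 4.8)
The plan is to derive the H\"older estimate from the weak Harnack inequality (Proposition~\ref{prop:WH}) applied to the oscillation of $u$ around its supremum and infimum, via the standard ``shrinking oscillation'' argument of Krylov--Safonov, now in the $L^p$-viscosity setting. First I would fix $\rho\in(0,R/2]$ and work on concentric balls $B_\rho(x_0)\subset B_{2\rho}(x_0)\subset B_{2R}$ centered at an arbitrary $x_0\in B_R$. Set
$$
M_\rho:=\sup_{B_{2\rho}(x_0)}u,\qquad m_\rho:=\inf_{B_{2\rho}(x_0)}u,\qquad \omega(\rho):=M_\rho-m_\rho.
$$
The two nonnegative functions $v_1:=M_{2\rho}-u$ and $v_2:=u-m_{2\rho}$ are continuous on $B_{2\rho}(x_0)$; by the structure condition \eqref{UE} together with \eqref{Azero} (so that $F(x,0,O)=0$ and $\mu\geq0$), the $L^p$-viscosity solution $u$ of \eqref{eq:no_obstacle} makes $v_1$ an $L^p$-viscosity supersolution of $\P^+(D^2 v)+\mu|Dv|-f=0$ and $v_2$ an $L^p$-viscosity supersolution of $\P^+(D^2v)+\mu|Dv|+f=0$ (absorb the sign of $f$ into $\|f\|_{L^{p\wedge n}}$); this is exactly the type of equation covered by Proposition~\ref{prop:WH}, after a rescaling so that $\|\mu\|_{L^q}$ is controlled uniformly in $\rho\le R\le 1$.

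The key step is then: applying Proposition~\ref{prop:WH} to $v_1$ and to $v_2$ on $B_\rho(x_0)$ gives
$$
\|v_i\|_{L^{\e_0}(B_\rho(x_0))}\le C_0\rho^{n/\e_0}\Bigl(\inf_{B_\rho(x_0)}v_i+\rho^{\a_0}\|f\|_{L^{p\wedge n}(B_{2\rho}(x_0))}\Bigr),\qquad i=1,2.
$$
Adding these two inequalities and using $v_1+v_2=M_{2\rho}-m_{2\rho}=\omega(2\rho)$ (a constant!) on the left, so that $\|v_1+v_2\|_{L^{\e_0}(B_\rho)}\ge c\,\rho^{n/\e_0}\omega(2\rho)$ by the quasi-triangle inequality \eqref{eq:tri}, while $\inf v_1+\inf v_2\le (M_{2\rho}-M_\rho)+(m_\rho-m_{2\rho})=\omega(2\rho)-\omega(\rho)$ on the right, yields
$$
c\,\omega(2\rho)\le C_0\Bigl(\omega(2\rho)-\omega(\rho)+2\rho^{\a_0}\|f\|_{L^{p\wedge n}(B_{2R})}\Bigr),
$$
which rearranges to $\omega(\rho)\le\theta\,\omega(2\rho)+C\rho^{\a_0}\|f\|_{L^{p\wedge n}(B_{2R})}$ for some fixed $\theta\in(0,1)$ depending only on $n,\L/\l,p,q,\|\mu\|_{L^q}$. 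A standard iteration lemma on this dyadic decay inequality then produces a $\hat\a\in(0,\a_0]$ and a constant so that $\omega(\rho)\le C(\rho/R)^{\hat\a}\bigl(\|u\|_{L^\infty(B_{2R})}+R^{\a_0}\|f\|_{L^{p\wedge n}(B_{2R})}\bigr)$ for all $\rho\le R$; choosing $\rho\sim|x-y|$ and using that $x,y\in B_R$ lie in a common ball $B_{2\rho}(x_0)$ gives the stated pointwise bound with $K_1$ and $\hat\a$ as claimed.

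The main obstacle, and the only genuinely delicate point, is the uniform control of the constants $\e_0,C_0,\theta$ as the radius $\rho$ shrinks: Proposition~\ref{prop:WH}'s constants depend on $\|\mu\|_{L^q(B_{2\rho})}$, and while $\|\mu\|_{L^q(B_{2\rho})}\le\|\mu\|_{L^q(B_{2R})}$ does give a crude uniform bound, the cleanest route is to rescale to the unit ball --- set $\tilde u(z):=u(x_0+2\rho z)$, $\tilde f(z):=(2\rho)^{2-n/(p\wedge n)}\,(\cdots)$, $\tilde\mu(z):=(2\rho)^{1-n/q}\mu(x_0+2\rho z)$ --- and check that the rescaled ingredients satisfy the same structural hypotheses with $\|\tilde\mu\|_{L^q(B_1)}\le\|\mu\|_{L^q(B_{2R})}$ (using $\rho\le1$ and $q>n$), so that a single application of Proposition~\ref{prop:WH} on $B_1$ suffices and the scaling weights $\rho^{\a_0}$, $R^{\a_0}$ track $\|f\|_{L^{p\wedge n}}$ correctly. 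Once the rescaling bookkeeping is set up the rest is the classical oscillation-decay argument, and I would present it compactly, citing \eqref{eq:tri} for the $L^{\e_0}$ quasi-norm manipulations and an elementary iteration lemma for the final step.
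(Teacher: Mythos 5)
Your proposal is essentially the paper's own proof: apply the weak Harnack inequality (Proposition \ref{prop:WH}) on $B_r(x)$ to the two nonnegative supersolutions $M_{2r}-u$ and $u-m_{2r}$, add the resulting estimates via \eqref{eq:tri}, and deduce the oscillation decay $\omega(r)\leq\theta_0\,\omega(2r)+Cr^{\alpha_0}\|f\|_{L^{p\wedge n}(B_{2R})}$, which the standard iteration lemma (Lemma 8.23 of \cite{GT}) converts into the stated H\"older estimate. The only blemish is notational (you define $M_\rho$, $m_\rho$ as extrema over $B_{2\rho}(x_0)$ but then use them as extrema over $B_\rho(x_0)$, which is what the argument requires), and your rescaling discussion is consistent with the fact that Proposition \ref{prop:WH} is already stated in scaled form with constants depending only on $n,\Lambda/\lambda,p,q$ and $\|\mu\|_{L^q}$.
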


\begin{proof}
Fix $x\in B_{R}$. 
For $0<s\leq R$, 
we  set
$$
M_s:=\sup_{B_s(x)} u,\quad\mbox{and}\quad m_s:=\inf_{B_s(x)} u.
$$
Now, for $0<r\leq \fr R2$, setting
$$
U:=u-m_{2r}\geq 0,\quad\mbox{and}\quad V:=M_{2r}-u\geq 0\quad\mbox{in }
B_{2r}(x),$$
we immediately see that $U$ and $V$ are $L^p$-viscosity supersolutions of \eqref{eq:P+} with $f$ replaced by $-f^-$ and $-f^+$, respectively. 
Hence, in view of Proposition \ref{prop:WH}, we have 
$$
\| U\|_{L^{\e_0}(B_r(x))}\leq C_0r^{\fr{n}{\e_0}}\le(\inf_{B_r(x)} U+r^{\a_0}
\| f^-\|_{L^{p\wedge n}(B_{2r}(x))}\ri),
$$
$$
\| V\|_{L^{\e_0}(B_r(x))}\leq C_0r^{\fr{n}{\e_0}}\le(\inf_{B_r(x)} V+r^{\a_0}
\| f^+\|_{L^{p\wedge n}(B_{2r}(x))}\ri).
$$
Therefore, in view of Proposition \ref{prop:WH}, we can find $C_0'>1$ such that 
$$
\begin{array}{rcl}
M_{2r}-m_{2r}&=&|B_r|^{-\fr{1}{\e_0}}\| M_{2r}-m_{2r}\|_{L^{\e_0}(B_r(x))}
\\
&\leq &|B_r|^{-\fr{1}{\e_0}}C_{\e_0}\le(
\| V\|_{L^{\e_0}(B_r(x))}+\| U\|_{L^{\e_0}(B_r(x))}\ri)\\
&\leq&C_0'\le(M_{2r}-M_r+m_r-m_{2r}+r^{\a_0}\| f\|_{L^{p\wedge n}(B_{2R})}\ri),
\end{array}
$$
where $C_{\e_0}\geq 1$ is from \eqref{eq:tri}. 
Thus, there exists  $\theta_0\in (0,1)$ such that
$$
\o (r)\leq \theta_0 \o (2r)+ r^{\a_0}\| f\|_{L^{p\wedge n}(B_{2R})},
$$
where $\o (r)=M_r-m_r$. 
Hence, the standard argument (e.g. Lemma 8. 23 in \cite{GT}) implies that 
$$
|u(x)-u(y)|\leq K_1\le(\fr{|x-y|}{R}\ri)^{\hat\a}\le( \| u\|_{L^\infty (B_{2R})}+R^{\a_0}
\| f\|_{L^{p\wedge n}(B_{2R})}\ri)
$$
for some $K_1>0$ and $\hat\a\in (0,\a_0]$. 
\end{proof}


\begin{rem}
One of key ideas of this paper is a different choice of $M_s$ and $m_s$ in the above for the proof of Lemma \ref{lem:L}. 
\end{rem}

When $p>n$ as in \eqref{Apq2} in Section 2. 2, 
we recall  the following regularity result for fully nonlinear PDE.


\begin{prop}\label{prop:C1beta}(\cite{Caf, CafCab,S}) 
Let $\O=B_2$. 
Under \eqref{Apq}, \eqref{Af},  
there exist $\hat\b\in (0,1)$ and $K_2>0$ such that if $u\in C(B_{2})$ is an $L^p$-viscosity subsolution and 
$L^p$-viscosity supersolution, respectively, of 
$$
\P^-(D^2u)=0\quad \mbox{and}\quad \P^+(D^2u)=0\quad\mbox{in }B_2,$$
then it follows that
$$
\| u\|_{C^{1,\hat\b}(\ol B_1)}\leq K_2\| u\|_{L^\infty(B_2)}.$$
\end{prop}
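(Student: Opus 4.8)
The assertion is the classical interior $C^{1,\hat\b}$ estimate for functions squeezed between the two Pucci extremal operators, so the shortest route is to quote it directly from \cite{Caf,CafCab,S}; for completeness I indicate how the argument runs. Since $\P^\pm$ are continuous, $L^p$- and $C$-viscosity sub/supersolutions coincide here (Proposition 2.9 in \cite{CCKS}), so $u$ is simultaneously a $C$-viscosity subsolution of $\P^-(D^2u)=0$ and a $C$-viscosity supersolution of $\P^+(D^2u)=0$, i.e. $u$ lies in the Pucci extremal class for the fixed constants $0<\l\le\L$. After the dilation $v(x)=u(2x)$ one works on $B_2$ normalized so that $\|u\|_{L^\infty(B_2)}=1$, and one notes that the two one-sided conditions are preserved under translations of the domain, dilations, and subtraction of affine functions, because $\P^\pm$ depend only on the Hessian.

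The core is an improvement-of-affine-approximation lemma: there exist universal $\rho\in(0,1)$ and $\hat\b\in(0,\a_0]$ such that, whenever $u$ satisfies the hypotheses in $B_1$ with $\|u\|_{L^\infty(B_1)}\le1$, there is an affine function $\ell$ with $|\ell(0)|+|D\ell|\le K$ and $\|u-\ell\|_{L^\infty(B_\rho)}\le\rho^{1+\hat\b}$. Granting it, one rescales $w(x)=\rho^{-1-\hat\b}(u-\ell)(\rho x)$, which again lies in the class with $\|w\|_{L^\infty(B_1)}\le1$, applies the lemma again, and sums the successive affine corrections; this produces at the origin a first-order Taylor expansion obeying the $C^{1,\hat\b}$ bound, and a covering and translation argument upgrades the pointwise statement at $0$ to the estimate on $\ol B_1$.

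The lemma itself is proven by compactness. If it failed, there would be a sequence $u_k$ satisfying the hypotheses in $B_1$ with $\|u_k\|_{L^\infty(B_1)}\le1$ but admitting no such affine approximation; by the Krylov--Safonov H\"older estimate for the Pucci extremal class — which underlies Proposition \ref{prop:Holder} and rests on the weak Harnack inequality of Proposition \ref{prop:WH} — the family $\{u_k\}$ is precompact in $C(\ol B_{3/4})$, and by stability of $L^p$-viscosity sub/supersolutions under uniform limits the limit $u_\infty$ obeys the same one-sided conditions in $B_{3/4}$. One then uses that $u_\infty$ is trapped between the solutions $\ol h$ of $\P^-(D^2\ol h)=0$ and $\ul h$ of $\P^+(D^2\ul h)=0$ in $B_{1/2}$ with boundary value $u_\infty$ on $\p B_{1/2}$ (comparison for the proper elliptic $\P^\pm$), and that these barriers are $C^{1,\a_0}$: since $\P^\pm$ are translation invariant, their incremental quotients again lie in the Pucci extremal class by uniform ellipticity, so the H\"older estimate applied to the quotients gives $\ol h,\ul h\in C^{1,\a_0}$. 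Combining this regularity of the two barriers with the comparison structure relating them to $u_\infty$ and with the weak Harnack inequality to control their mutual separation, one extracts for $u_\infty$ an affine approximation strictly better than $\rho^{1+\hat\b}$ once $\rho$ and $\hat\b$ are taken small, contradicting the assumed failure for large $k$. The main obstacle is precisely this closing step: $u$ solves no single uniformly elliptic equation but only lies between $\P^-$ and $\P^+$, so obtaining a common good affine approximation from the two barriers with universal constants is the delicate point, and it is the heart of Caffarelli's theorem.
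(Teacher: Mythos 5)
Your opening sentence (quote the estimate from \cite{Caf,CafCab,S}) is in fact all the paper does --- Proposition \ref{prop:C1beta} is stated without proof --- so the only question is whether the sketch you add would actually prove the statement, and it would not. The hypotheses you retain throughout (viscosity subsolution of $\P^-=0$ and supersolution of $\P^+=0$) say only that $u$ lies in the Krylov--Safonov class $S(\l,\L)$, and for that class an improvement-of-flatness lemma with universal constants is simply not available. Concretely, your compactness step produces a limit $u_\infty$ which again is only known to lie in this class, and trapping it between the solutions $\ol h$ and $\ul h$ of the two extremal equations with the same boundary data yields no smallness: $\ol h-\ul h$ is in general of the same order as the oscillation of the boundary values, not $o(1)$, so no affine approximation of $u_\infty$ better than $C^{0,\a}$ quality can be extracted from the two barriers. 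Indeed, no bound $\| u\|_{C^{1,\hat\b}(\ol B_1)}\leq K_2\|u\|_{L^\infty(B_2)}$ with $\hat\b, K_2$ depending only on $n,\l,\L$ can follow from the two-sided Pucci inequalities alone once $\L/\l$ is large: strong solutions of uniformly elliptic equations with rough coefficients belong to this class, and Safonov's optimality examples for the Krylov--Safonov estimate show that even their H\"older exponent degenerates, let alone a $C^{1,\hat\b}$ bound. So the step you yourself call ``the delicate point'' is not a gap to be filled later; it is precisely where an equation for $u$ is indispensable, and your formulation has discarded the equation.

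What the cited references prove --- and what the paper actually uses, since in the proof of Proposition \ref{prop:C1Holder} the present proposition is applied to $h$ solving the frozen equation $F_j(0,0,D^2h)=0$ --- is the interior $C^{1,\hat\b}$ estimate for viscosity solutions of one fixed, translation-invariant uniformly elliptic equation $F(D^2u)=0$; such solutions do lie in both extremal classes, but that is not the hypothesis that drives the proof. In Caffarelli's perturbative scheme the approximating function is the solution of the \emph{same} equation with the same boundary values, the closeness coming from stability/ABP for that single operator; in the incremental-quotient route one uses that $u(\cdot+he)-u(\cdot)$ is a difference of two solutions of the same $F$, hence belongs to $S(\l,\L)$ by uniform ellipticity as in \eqref{UE}, so the Krylov--Safonov estimate applies to the quotients and iterates to a Lipschitz bound and then to $C^{1,\hat\b}$. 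Either restate and use the proposition for solutions of $F(D^2u)=0$ (which is how it is invoked in Section 5) and cite \cite{Caf,CafCab,S}, as the paper does, or give one of these equation-based proofs; the two-barrier compactness argument as written cannot be closed.
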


We finally give a reasonable property of $L^p$-viscosity solutions of \eqref{eq:1}, which will be often used without mentioning it. 
We present a proof for the reader's convenience though it seems 
standard. 


\begin{prop}\label{prop:phi_u_psi}
Under \eqref{Apq}, \eqref{Af}, \eqref{Amu}, \eqref{UE}, \eqref{Azero} and \eqref{Aobstacles}, we assume $\phi,\psi\in C(\O)$. 
If $u\in C(\O)$ is an $L^p$-viscosity subsolution (resp., supersolution)  of \eqref{eq:1}, then it follows that
$$u\leq\psi\quad (\mbox{resp., }u\geq \phi)\quad\mbox{in }\O.$$
\end{prop}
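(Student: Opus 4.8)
The plan is to prove the two assertions separately and symmetrically, so I will only discuss the subsolution case; the supersolution case is entirely analogous with the obvious sign changes. Suppose, for contradiction, that $u$ is an $L^p$-viscosity subsolution of \eqref{eq:1} but $u>\psi$ somewhere in $\O$. Since $u-\psi\in C(\O)$, the open set $\O_0:=\{x\in\O:u(x)>\psi(x)\}$ is nonempty, and on $\O_0$ we have $u>\psi\ge\varphi$ by \eqref{Aobstacles}. My first step is to apply Proposition \ref{rem:1}: since $\psi\in C(\O)$ is \emph{not} a priori in $W^{2,p}_{loc}$ we cannot use it directly as a test function, but Proposition \ref{rem:1} tells us that $u\vee\gamma$ is an $L^p$-viscosity subsolution of $\P^-(D^2u)-\mu|Du|-f^+=0$ in any open set where a constant $\gamma\ge\varphi$, and more importantly, rereading the proof of that proposition, the key fact it isolates is: wherever $u>\varphi$, the function $u$ itself is an $L^p$-viscosity subsolution of $\max\{\P^-(D^2u)-\mu|Du|-f^+,\,u-\psi\}=0$. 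Actually, for the purpose here I want the cleaner statement that, on $\O_0$, $u$ is an $L^p$-viscosity subsolution of $u-\psi\le 0$ in the viscosity sense — but since $u-\psi>0$ on all of $\O_0$, this immediately forces the spatial PDE part $\P^-(D^2u)-\mu|Du|-f^+=0$ to hold as a viscosity \emph{subsolution} inequality, because the $\max$ of the two quantities is $\le 0$ and the obstacle term $u-\psi$ is strictly positive, hence cannot be the one attaining a nonpositive value.

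So the core of the argument is: on $\O_0$, $u$ is an $L^p$-viscosity subsolution of $\P^-(D^2u)-\mu|Du|-f^+=0$. Now I want to derive a contradiction using the geometry of $\O_0$. The natural device is a comparison / maximum-principle argument on a well-chosen subregion. Fix a point $x^\ast\in\O_0$ where $u-\psi$ is large, and pick a ball $B=B_r(x^\ast)\Subset\O_0$ (possible since $\O_0$ is open); on $\partial B$, since $B\subset\O_0$, we still have $u>\psi$, but the issue is that $u-\psi$ need not vanish anywhere on $\partial B$. The cleaner route is to instead take the connected component $\O_0'$ of $\O_0$ containing $x^\ast$ and work on a ball straddling $\partial\O_0'$: more precisely, since $u(x^\ast)>\psi(x^\ast)$ and $u=g\le\psi$ on $\partial\O$ by \eqref{Aobstacles}, we have $x^\ast\notin\partial\O$, and by continuity the set where $u>\psi$ cannot be all of $\O$ unless $u>\psi$ up to $\partial\O$, contradicting $u=g\le\psi$ there; hence $\partial\O_0\cap\O\ne\emptyset$ or $\O_0$ reaches $\partial\O$. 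In either case, choose a point $x_0$ on $\partial\O_0$ with $u(x_0)=\psi(x_0)$ (using $u=g\le\psi$ on $\partial\O$ and continuity of $u-\psi$), pick $r>0$ small so that $B_{2r}(x_0)\subset\O$, and set $w:=u-\psi$ on $\O_0\cap B_{2r}(x_0)$.

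At this point the main technical obstacle appears: $\psi$ is merely continuous, so $w=u-\psi$ need not satisfy any PDE, and I cannot simply invoke a maximum principle for $w$. The remedy — and I expect this to be the heart of the proof — is \emph{not} to work with $w$ but to use Proposition \ref{prop:LMP} (the local maximum principle) applied to the nonnegative subsolution $U:=(u-m)^+$ where $m:=\inf_{B_{2r}(x_0)}u$, combined with the fact that on $B_{2r}(x_0)\cap\O_0$ the function $u$ satisfies $\P^-(D^2u)-\mu|Du|-f^+=0$ in the viscosity sense and $u=\psi$ at $x_0$ while $u\le\psi$ outside $\O_0$; choosing $r$ small and using continuity of $\psi$ to make $\mathrm{osc}_{B_{2r}(x_0)}\psi$ and $r^{\alpha_0}\|f^+\|_{L^{p\wedge n}}$ as small as we like, Proposition \ref{prop:LMP} then bounds $\sup_{B_{r/2}(x_0)}(u-\psi)$ by a quantity that tends to $0$ as $r\to0$, forcing $u(x^\ast)-\psi(x^\ast)=0$ for $x^\ast$ near $x_0$ — and then a covering/continuation argument along $\O_0$ propagates this to all of $\O_0$, contradicting $x^\ast\in\O_0$. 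I would organize the write-up so that the delicate step is the reduction to the subsolution inequality on $\O_0$ via Proposition \ref{rem:1}, after which the local maximum principle does the rest; an alternative, possibly cleaner, ending replaces Proposition \ref{prop:LMP} by a direct barrier: on $\O_0\cap B_{2r}(x_0)$, compare $u$ with $\psi(x_0)+\e+C(|x-x_0|^2)$ type barriers built from $C^2$ functions (legitimate test functions since they lie in $W^{2,p}_{loc}$), using \eqref{Azero} and \eqref{UE} to check they are supersolutions of the Pucci equation, and conclude by the comparison principle for $L^p$-viscosity solutions of $\P^-(D^2u)-\mu|Du|-f^+=0$.
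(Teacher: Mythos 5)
Your proposal breaks down at its central reduction, and the machinery you assemble afterwards cannot repair it. At any point of $\O_0:=\{u>\psi\}$ where a $W^{2,p}$ test function touches $u$ from above, the subsolution property for \eqref{eq:1} gives (after passing to the limit in the ess-inf, using continuity of $u-\psi$ and $u-\phi$) that $\min\{\max\{F-f,\,u-\psi\},\,u-\phi\}\leq 0$ nearby in the essential sense; since $\min\{\max\{A,B\},C\}\geq\min\{B,C\}$ and both $u-\psi$ and $u-\phi$ are bounded below by a positive constant near such a point (recall $\phi\leq\psi<u$ there), this is already a contradiction --- the only thing left to supply is the existence of a touching point. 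You actually arrive at this configuration (``the max is $\leq 0$ but $u-\psi>0$'') and then, instead of stopping, extract only the weaker statement that $u$ is an $L^p$-viscosity subsolution of $\P^-(D^2u)-\mu|Du|-f^+=0$ on $\O_0$. That reduction discards precisely the information that makes the proposition true: a mere Pucci subsolution with $f^+\not\equiv 0$ may exceed its boundary values (ABP only bounds the excess by $C\|f^+\|_{L^n}$), so no local maximum principle, barrier, or comparison argument on $\O_0$ with boundary data $u=\psi$ can force $u\leq\psi$ inside, and your ``LMP with small radii plus covering/continuation'' ending, as well as the alternative barrier ending, do not close. Two further defects: you never actually produce a $W^{2,p}$ touching point (the viscosity inequality is not a pointwise inequality on $\O_0$), and your localization invokes $u=g\leq\psi$ on $\p\O$, which is not among the hypotheses of the proposition (the boundary condition \eqref{BC} is not assumed, and $u$ is only assumed to be in $C(\O)$).

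For comparison, the paper's proof is exactly the short route you passed by: assuming $(u-\psi)(x_0)=\theta>0$ and translating $x_0$ to the origin, it maximizes $u(x)-\fr{1}{2\e}|x|^2$ to obtain interior touching points $x_\e\to x_0$, notes that $u\geq\psi+\fr{\theta}{2}\geq\phi+\fr{\theta}{2}$ near $x_0$ by \eqref{Aobstacles}, and reads off from the definition that $\min\{(u-\psi)(x_\e),(u-\phi)(x_\e)\}\leq 0$, a contradiction; no weak Harnack inequality, local maximum principle, or comparison principle is needed.
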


\begin{proof}
We give  a proof only for $L^p$-viscosity subsolutions since the other case can be shown similarly. 
Assume that $(u-\psi)(x_0)=:\theta >0$ for $x_0\in\O$, then we will 
have a contradiction. 
For simplicity, we may suppose $x_0=0\in\O$ by translation. 

For $\e>0$, we let $x_\e\in \ol \O$ be such that 
$\max\{ u(x)-\fr 1{2\e}|x|^2 \ | \ x\in\ol\O\}=u(x_\e)-\fr 1{2\e}|x_\e|^2$. 
Since it is easy to see that $\dis\lim_{\e\to 0}x_\e= 0$, we may suppose $x_\e\in\O$. 
Moreover,  we may suppose $u\geq \psi +\fr \theta 2$ 
in $B_r\Subset \O$ for some $r>0$. 
Thus, by the first inequality in \eqref{Aobstacles}, 
we have 
\begin{equation}\label{2prop:1}
u\geq \psi+\fr \theta2\geq \phi +\fr \theta2\quad\mbox{in }B_r.
\end{equation}
However, from the definition, we have
$$
0\geq \lim_{s\to 0}ess\inf_{B_s(x_\e)}\min\le\{ \max\le\{
F\le(x,\fr x\e,\fr 1\e I\ri)-f(x),(u-\psi)(x)\ri\},(u-\phi)(x)\ri\}
,$$
which yields
$$
0\geq \min\{ (u-\psi)(x_\e),(u-\phi)(x_\e)\}.$$
This contradicts to \eqref{2prop:1}. 
\end{proof}

\subsection{Main results}

For obstacles, we at least  assume that 
\begin{equation}\label{Aregob}
\phi, \psi\in C(\ol\O). 
\end{equation}

In order to obtain the estimate near $\p\O$, 
we suppose the following condition on the shape of $\O$, which was  introduced in \cite{BNV}.  
\begin{equation}\label{Aboundary}
\le\{\begin{array}{c}
\mbox{There exist }R_0>0 \mbox{ and }\Theta_0>0
\mbox{ such that}\\
|B_r(x)\setminus \O|\geq \Theta_0r^n\mbox{ for }(x,r)\in \p\O\ti (0,R_0).
\end{array}
\ri.
\end{equation}
We will also suppose 
\begin{equation}\label{AregDirichlet}
g\in C(\p\O). 
\end{equation}

We call a function $\o:[0,\infty)\to [0,\infty)$ a modulus of continuity  if $\o$ is nondecreasing and continuous in $[0,\infty)$ such that $\o(0)=0$.


Our first result is the global equi-continuity estimate on $L^p$-viscosity solutions. 

\begin{thm}\label{corEst}
Assume \eqref{Apq}, \eqref{Af}, \eqref{Amu}, \eqref{UE}, \eqref{Azero}, \eqref{Aobstacles},  \eqref{Aregob}, 
 \eqref{Aboundary} and \eqref{AregDirichlet}. 
Then, there exists a modulus of continuity $\o_0$ such that 
if $u\in C(\ol\O)$ is an $L^p$-viscosity solution of \eqref{eq:1} satisfying
\begin{equation}\label{BC}
u=g\quad\mbox{on }\p\O ,
\end{equation}  
then it follows that  
$$
|u(x)-u(y)|\leq \o_0(|x-y|)\quad \mbox{for }x,y\in \ol\O.
$$
If we moreover assume that
$$
\phi,\psi\in C^{\a_1}(\ol\O),\quad\mbox{and}\quad g\in C^{\a_1}(\p\O)\quad\mbox{for }\a_1\in (0,1),$$
then there exist $\a_2\in (0,\a_0\wedge \a_1]$ and $C>0$, independent of $u$, 
 such that
$$
|u(x)-u(y)|\leq C|x-y|^{\a_2}\quad\mbox{for }x,y\in\ol\O.$$
\end{thm}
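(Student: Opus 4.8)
The plan is to combine an interior oscillation estimate with a boundary oscillation estimate, exactly as in the proof of Proposition \ref{prop:Holder}, but now carrying the obstacles along. For the interior part, fix $x_0\in\O$ and $r>0$ with $B_{2r}(x_0)\Subset\O$, and set $M_{2r}=\sup_{B_{2r}(x_0)}u$, $m_{2r}=\inf_{B_{2r}(x_0)}u$. The key point (the different choice of $M_s,m_s$ alluded to in the Remark after Proposition \ref{prop:Holder}) is that $U:=u-m_{2r}$ need not be a supersolution of a Pucci equation on the whole ball, because $u$ may touch the lower obstacle $\phi$; however, on the (open) set where $u>\phi$ the subsolution property gives $F(x,Du,D^2u)\le f$, hence $\P^+(D^2U)+\mu|DU|+f^-\ge 0$ there, and where $u\le\phi$ we have $m_{2r}\le u\le\phi$, so on that set $U\le \phi-m_{2r}\le \osc_{B_{2r}(x_0)}\phi=:\o_\phi(2r)$. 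Applying Proposition \ref{prop:WH} to $(U-\o_\phi(2r))^+$ (which is a nonnegative $L^p$-viscosity supersolution of \eqref{eq:P+} with $f$ replaced by $-f^-$, since adding a constant preserves the Pucci operators and truncating from below preserves supersolutions of such extremal equations), and symmetrically to $(V-\o_\psi(2r))^+$ with $V:=M_{2r}-u$, yields, after the same chain of inequalities as in Proposition \ref{prop:Holder},
\[
\o(r)\le\theta_0\,\o(2r)+C\bigl(\o_\phi(2r)+\o_\psi(2r)+r^{\a_0}\|f\|_{L^{p\wedge n}(\O)}\bigr),
\]
where $\o(s)$ is the oscillation of $u$ on $B_s(x_0)$ and $\o_\phi,\o_\psi$ are the moduli of continuity of $\phi,\psi$ on $\OO$ (these exist by \eqref{Aregob} and compactness). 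Iterating this inequality by the standard lemma (Lemma 8.23 in \cite{GT}, or a direct summation when $\o_\phi,\o_\psi$ are merely continuous) produces an interior modulus of continuity, and in the Hölder case $\o_\phi(s),\o_\psi(s)\le Cs^{\a_1}$ gives $\o(r)\le C r^{\a_2}$ with $\a_2\in(0,\a_0\wedge\a_1]$.

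For the boundary estimate, fix $z\in\p\O$ and use the exterior measure condition \eqref{Aboundary}: one builds a barrier on $B_{2r}(z)\cap\O$ for small $r$. The sub/supersolution property of $u$ inside $\{u>\phi\}$ and $\{u<\psi\}$, together with $\phi\le g\le\psi$ on $\p\O$ from \eqref{Aobstacles}, allows the same truncation trick — near $\p\O$, $u$ is squeezed between $g$ and the obstacles anyway by Proposition \ref{prop:phi_u_psi}, so the relevant comparison is against the Pucci extremal equations with right-hand sides $\pm f^\mp$, for which \eqref{Aboundary} is precisely the hypothesis under which boundary barriers of the form $x\mapsto K|x-z|^{\gamma}$ (or a logarithmic/power barrier adapted to the $L^p$ norm of $f$ and $\mu$, as in \cite{BNV} and \cite{KS3}) exist. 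This gives $|u(x)-g(z)|\le \o_{\p}(|x-z|)$ for $x\in\OO$ near $z$, with $\o_\p$ depending on the moduli of $\phi,\psi,g$ and on $\Theta_0,R_0$, and polynomial decay $C|x-z|^{\a_2'}$ in the Hölder case.

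Finally I would patch the two estimates together in the usual way: for $x,y\in\OO$, if $|x-y|$ is small compared to $\dist(x,\p\O)$ use the interior estimate on a ball around $x$; otherwise both $x$ and $y$ are within a controlled multiple of $|x-y|$ of $\p\O$, and the boundary estimate (applied at a nearest boundary point to each, plus the modulus of $g$ along $\p\O$) controls $|u(x)-u(y)|$. This yields a single global modulus $\o_0$, and in the Hölder-data case a single exponent $\a_2\in(0,\a_0\wedge\a_1]$ and constant $C$. I expect the main obstacle to be the boundary estimate: one must verify that truncating $u$ against the obstacle moduli still leaves a function to which the Pucci-equation boundary barrier argument of \cite{BNV, KS3} applies with constants independent of the particular solution $u$ — in particular that the barrier can absorb simultaneously the unbounded drift $\mu\in L^q$, the unbounded $f\in L^p$, and the merely-continuous obstacle corrections. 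The interior estimate, by contrast, is a routine adaptation of Proposition \ref{prop:Holder} once the truncation observation is in hand.
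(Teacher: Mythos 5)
Your skeleton (an interior oscillation-decay via the weak Harnack inequality with a penalty given by the obstacles' moduli, a separate boundary estimate, then patching) is the same as the paper's, but the central interior step is broken as written. First, the pairing is backwards: $U=u-m_{2r}$ needs the \emph{super}solution property of $u$, and the equation \eqref{eq:1} only forces $F\geq f$ where $u<\psi$, so this property fails on the upper coincidence set $\{u=\psi\}$; the lower obstacle $\phi$ obstructs the \emph{sub}solution property, i.e.\ what $V=M_{2r}-u$ needs. Your justification ``where $u>\phi$ the subsolution property gives $F\leq f$, hence $\P^+(D^2U)+\mu|DU|+f^-\geq 0$'' derives a $\P^-$-subsolution inequality and then asserts a $\P^+$-supersolution one. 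Second, and decisively, $(U-\o_\phi(2r))^+$ is a maximum with a constant: maxima do \emph{not} preserve viscosity supersolutions (minima do), and this truncation does not excise the bad set, since on $\{u=\psi\}$ one may have $U$ large. Indeed, a solution can coincide with a smooth uniformly convex $\psi$ on an open set (e.g.\ $F=\P^-$, $f=0$, $\mu=0$, $u=\psi=|x|^2$ locally, which is compatible with \eqref{eq:1} there), and then $(U-\o_\phi(2r))^+$ is genuinely not a supersolution of \eqref{eq:P+}, so Proposition \ref{prop:WH} cannot be invoked. The paper's device (Proposition \ref{rem:1}) is the correct one: replace $u$ by $u_+:=u\vee(\phi(x_0)+\s_0(2r))$ and $u_-:=u\wedge(\psi(x_0)-\s_0(2r))$, with the constants chosen above $\phi$ resp.\ below $\psi$ on $B_{2r}$, take $M_s=\sup u_+$, $m_s=\inf u_-$, apply the weak Harnack to $M_{2r}-u_+$ and $u_--m_{2r}$, and use $0\leq u_+-u,\;u-u_-\leq 2\s_0(2r)$ (via Proposition \ref{prop:phi_u_psi}) to return to $u$; with that correction your oscillation inequality and the H\"older refinement are exactly Lemma \ref{lem:L}.

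The boundary step is the second genuine gap: \eqref{Aboundary} is only an exterior measure-density condition and does not produce pointwise barriers of the form $K|x-z|^{\gamma}$; barrier constructions require an exterior cone/sphere-type condition, which Theorem \ref{corEst} does not assume (the uniform exterior cone appears only in Theorem \ref{thm:Exist}), and handling $\mu\in L^q$, $f\in L^p$ inside a barrier is an additional unresolved issue that you yourself flag. The paper avoids barriers entirely: after the same truncation $u_\pm$, it caps $M_{2r}-u_+$ and $u_--m_{2r}$ by the constants $c_\pm$ formed from the boundary values, extends them by those constants outside $\O$ (with $\mu,f$ extended by zero), and applies the \emph{interior} weak Harnack on the full ball $B_{2r}$; the hypothesis \eqref{Aboundary} enters only through the measure lower bound $\|U\|_{L^{\e_0}(B_r)}\geq c_+(\Theta_0 r^n)^{1/\e_0}$, which yields the boundary oscillation decay with the extra $\s_0(2r)+\o_g(2r)$ penalty (Lemma \ref{lem:B}). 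Your final patching of the interior and boundary moduli is standard and fine once both estimates are in hand.
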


Thanks to Theorem \ref{corEst}, we establish the following existence result.


\begin{thm}\label{thm:Exist}
Under  \eqref{Apq}, \eqref{Af}, \eqref{Amu}, \eqref{UE}, \eqref{Azero}, \eqref{Aobstacles},  \eqref{Aregob}, 
 and \eqref{AregDirichlet}, 
 we assume the uniform exterior cone condition on $\O$. 
 Then,  
there exists an $L^p$-viscosity solution $u\in C(\ol\O)$ of \eqref{eq:1} 
satisfying \eqref{BC}. 
\end{thm}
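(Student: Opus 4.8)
\textbf{Proof proposal for Theorem \ref{thm:Exist}.}
The plan is to construct the solution via Perron's method, using Theorem \ref{corEst} to control the modulus of continuity up to the boundary. First I would set up the Perron family
$$
\mathcal{S}:=\{ v\in C(\ol\O) \ | \ v \text{ is an } L^p\text{-viscosity subsolution of }\eqref{eq:1},\ v\leq \psi \text{ in }\O,\ v\leq g \text{ on }\p\O\},
$$
and define $u(x):=\sup\{ v(x) \ | \ v\in\mathcal{S}\}$. To see $\mathcal S\neq\emptyset$, note that any $L^p$-viscosity subsolution of $\P^-(D^2w)-\mu|Dw|-f^+=0$ lying below $\psi$ and below $g$ on $\p\O$ works; such barriers are produced from the uniform exterior cone condition together with \eqref{Aboundary}-type arguments (the exterior cone condition implies \eqref{Aboundary}), giving a continuous subsolution attaining the boundary data. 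Likewise I would produce a supersolution $\overline w$ with $\overline w\geq \phi$ and $\overline w=g$ on $\p\O$; by the comparison principle for $L^p$-viscosity solutions (available in the uniformly elliptic setting of \eqref{UE}, \eqref{Azero}, cf. \cite{CCKS,KS3}) every $v\in\mathcal S$ satisfies $v\leq\overline w$, so $u$ is well-defined and $\phi\leq \underline w\leq u\leq\overline w\leq\psi$ with $u=g$ on $\p\O$.

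Next I would run the usual Perron bookkeeping. The family $\mathcal S$ is closed under finite maxima (a max of $L^p$-viscosity subsolutions of \eqref{eq:1} is again one, using Proposition \ref{rem:1}-style reasoning and the structure of the $\min$-$\max$), so the upper semicontinuous envelope $u^*$ is an $L^p$-viscosity subsolution and $u^*\leq \overline w$, whence $u^*\in\mathcal S$ and $u^*=u$; in particular $u$ is an $L^p$-viscosity subsolution. For the supersolution property I would argue by contradiction: if $u_*$ fails to be an $L^p$-viscosity supersolution at some interior point, the standard bump construction yields $v\in\mathcal S$ strictly above $u$ near that point, contradicting maximality. Here one must be careful that the bump keeps $v\leq\psi$: at a point where the supersolution test fails we necessarily have $u_*<\psi$ there (otherwise the obstacle term $u-\psi$ already makes \eqref{eq:1} hold as a supersolution), so a small enough perturbation stays below $\psi$, and the inequality $v\leq g$ on $\p\O$ is untouched since the bump is localized in $\O$. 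The stability of $L^p$-viscosity sub/supersolutions under sup/inf of envelopes is exactly the content of the results in \cite{CCKS}, which I would cite.

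It remains to check continuity up to $\ol\O$ and attainment of the boundary data. Interior continuity of $u$ follows from Proposition \ref{prop:Holder} (applied locally to the equation \eqref{eq:no_obstacle} on the contact-free regions, combined with Proposition \ref{rem:1}, so that $u$ is locally H\"older), giving $u\in C(\O)$; then $u$ is a genuine $L^p$-viscosity solution of \eqref{eq:1}. For the boundary, the squeezing $\underline w\leq u\leq\overline w$ together with $\underline w,\overline w\in C(\ol\O)$ and $\underline w=\overline w=g$ on $\p\O$ forces $u\to g$ continuously at $\p\O$, so $u\in C(\ol\O)$ and \eqref{BC} holds. The main obstacle is the boundary barrier construction: producing continuous sub- and supersolutions of the Pucci extremal equations with the unbounded gradient coefficient $\mu\in L^q(\O)$, $q>n$, that attain the continuous datum $g$ on $\p\O$ under only the uniform exterior cone condition. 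This is where the condition $q>n$ is essential — it lets one absorb the $\mu|Dw|$ term via the same $L^q$-integrability used in Propositions \ref{prop:WH}--\ref{prop:LMP} — and where I would invoke (or adapt) the barrier estimates of \cite{KS3,KS4}; everything else is routine Perron machinery.
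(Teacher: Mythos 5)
Your route (Perron's method) is genuinely different from the paper's, which proceeds by penalization: mollify $F,f,\mu,\phi,\psi$, solve the penalized problems \eqref{Approximate} via \cite{CKLS} under the exterior cone condition, prove a $\delta$-independent bound on the penalty terms, pass $\delta\to 0$ to get $C$-viscosity solutions of \eqref{ApproximateEq}, and then use the uniform global equi-continuity from Theorem \ref{corEst} (uniform in $\e$ thanks to \eqref{ApproxProp}) plus Ascoli--Arzel\`a and an $L^p$-viscosity stability argument to pass $\e\to 0$. Unfortunately, your proposal has a genuine gap at its load-bearing step: you invoke ``the comparison principle for $L^p$-viscosity solutions'' of \eqref{eq:1} (and implicitly uniqueness) in the setting \eqref{Apq}--\eqref{UE}, where $F$ is merely measurable in $x$, $f\in L^p$ with $p$ possibly below $n$, and $\mu\in L^q$ is unbounded. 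No such comparison theorem is known here: \cite{CCKS} gives comparison only when one of the two functions is a strong ($W^{2,p}_{loc}$) sub- or supersolution, and the whole point of the approximation scheme in the paper (and in \cite{CKLS}) is to bypass the absence of comparison between two merely continuous $L^p$-viscosity sub/supersolutions. Note also that the theorem claims existence only, not uniqueness, which is a hint that a comparison-based Perron argument is out of reach. Without comparison you cannot conclude $v\le\overline w$ for all $v\in\mathcal S$, nor run the final squeeze at $\p\O$, nor justify $u^*=u_*$.

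A second, related difficulty is that the ``routine Perron machinery'' is not routine for $L^p$-viscosity solutions: closedness of the subsolution class under suprema of infinite families and the bump construction at a point where the supersolution test fails are proved in the $C$-viscosity theory using $C^2$ test functions and pointwise evaluation of a continuous $G$; with $W^{2,p}$ test functions and the $\mathop{\mathrm{ess}\inf}/\mathop{\mathrm{ess}\sup}$ formulation, the bump you add must itself be shown to be an $L^p$-viscosity subsolution of the full min--max equation, which typically requires solving auxiliary Dirichlet problems or working with strong solutions, again circling back to tools the paper only has for the regularized operators $F_\e$, $f_\e$. Your barrier discussion (exterior cone $\Rightarrow$ \eqref{Aboundary}, $q>n$ to absorb $\mu|Dw|$) is reasonable and is indeed the role the cone condition plays via \cite{Miller, CKLS}, but barriers alone do not rescue the argument once comparison is unavailable. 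If you want to keep a Perron-flavoured proof, you would have to run it at the level of the approximate problems \eqref{ApproximateEq} with continuous data (where comparison does hold, as used in Theorem \ref{thm:Approx}) and then pass to the limit exactly as the paper does; at that point you have reproduced the paper's penalization/approximation scheme.
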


For further regularity results, assuming 
\begin{equation}\label{Apq2}
q\ge p>n,
\end{equation}
we define $\b_0\in (0,1)$ by 
$$
\b_0:=1-\fr np.$$
To show $C^{1,\b}$ estimates, we will see in Section 5 that 
it is necessary to suppose 
that 
\begin{equation}\label{Aregob2}
\phi,\psi\in C^{1,\b_1}(\O)\quad \mbox{for some }\b_1\in (0,1). 
\end{equation}
We will use the constant
$\b_2$ defined by
$$
\b_2:=\b_0\wedge \b_1\in (0,1). 
$$

We also suppose that obstacles do not coincide in $\O$; 
\begin{equation}\label{Aobstacles2}
\mbox{there is } r_0>0\mbox{ such that }\psi-\phi\geq r_0\mbox{ in }\O .
\end{equation}

In order to state the next theorem, we prepare some notations. 
For small $r>0$, we introduce subdomains of $\O$:
$$\O_r :=\{x\in\O \ | \ \mbox{dist}(x,\p\O)>r\} .$$

For $u\in C(\O)$ such that $\phi\leq u\leq \psi$ in $\O$, 
 we set 
$$C^-[u]:=\le\{x\in \O \ | \ u(x)=\varphi(x)\}, \ C^+[u]:=\{x\in \O \ | \ 
u(x)=\psi(x)\ri\},$$
$$C^\pm[u]:=C^-[u]\cup C^+[u]\subset\O,$$
and  the non-coincidence set
$$
N[u]:=\O\setminus C^\pm [u]=\{ x\in\O \ | \ \phi (x)<u(x)<\psi(x)\}. $$
For small $r>0$, we define subdomains of $N[u]$
$$ 
N_r[u]:=\{ x\in \O_r \ | \ \mbox{dist}(x,C^\pm[u])>r\}.$$


For $F$ in \eqref{eq:1}, we use the following notation:
$$
\theta ( x,y):=\sup_{X\in S^n}\fr{|F(x,0,X)-F(y,0,X)|}{1+\| X\|}\quad\mbox{for }x,y\in\O.$$


\begin{thm}\label{thm:C1Holder1}
Assume \eqref{Apq2}, \eqref{Af}, \eqref{Amu}, \eqref{UE}, \eqref{Azero}, \eqref{Aregob2} and \eqref{Aobstacles2}. 
For each small $\e>0$, there exist $C>0$ and $\d_0>0$ such that 
if $u\in C(\O)$ is an $L^p$-viscosity solution of \eqref{eq:1}, and 
if 
 \begin{equation}\label{Aconti}
\fr 1r\| \theta (y,\cdot )\|_{L^n(B_r(y))}\leq \d_0 \quad\mbox{for }r\in (0,\e)\mbox{ and }y\in N_{\e}[u], 
\end{equation} 
then 
it follows that 
$$
|Du(x)-Du(y)|\leq C|x-y|^{\b_3}\quad\mbox{for }x,y\in \O_\e,$$
where 
$$
\b_3:=\le\{\begin{array}{ll}
\b_2&\mbox{provided }\b_2<\hat\b,\\
\mbox{any }\b\in (0,\hat\b)&\mbox{provided }\b_2\geq \hat\b.
\end{array}\ri.
$$
\end{thm}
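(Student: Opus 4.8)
The plan is to localize the $C^{1,\b_3}$ estimate at a point $x_0\in\O_\e$ and split into three cases according to the ``geometry'' of the coincidence set near $x_0$, exactly as one does for the ordinary obstacle problem, but here with the $L^p$-viscosity machinery and the weak Harnack/local maximum principle (Propositions \ref{prop:WH}, \ref{prop:LMP}) replacing the classical estimates. By \eqref{Aobstacles2} the two obstacles are uniformly separated, so near any point at most one of them is active; this is what makes the bilateral problem behave, locally, like two separate unilateral problems. Throughout I would use the scaling $x\mapsto x_0+rx$ together with the fact (Proposition \ref{rem:1}) that, on the side where $u>\phi$, the function $u$ (or $u\vee\g$) is a subsolution of $\P^-(D^2u)-\mu|Du|-f^+=0$, and dually on the side where $u<\psi$.

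First I would record the $L^\infty$ bound and the global $C^{\b}$-bound for $u$ on a slightly larger subdomain: this follows from Theorem \ref{corEst} (applied on interior balls, using only interior hypotheses) together with Proposition \ref{prop:Holder}, so $\|u\|_{L^\infty(\O_{\e/2})}$ and a Hölder seminorm are under control by the data. Then the trichotomy: (i) \emph{Interior of the non-coincidence set.} If $B_r(x_0)\subset N[u]$ for $r$ comparable to $\mathrm{dist}(x_0,C^\pm[u])$, then on $B_r(x_0)$ the function $u$ solves the unconstrained equation $F(x,Du,D^2u)=f$, and the condition \eqref{Aconti} on $\theta$ gives the small-oscillation-in-$x$ hypothesis needed to invoke the fully nonlinear $C^{1,\b}$ regularity (Proposition \ref{prop:C1beta}, in its $L^p$, $p>n$, variable-coefficient form — this is the Caffarelli--Escauriaza--Świech circle of results cited), yielding $|Du(x)-Du(y)|\le C(|x-y|/r)^{\hat\b}(\dots)$ on $B_{r/2}(x_0)$. (ii) \emph{Deep inside a coincidence set.} If $B_r(x_0)\subset C^+[u]$ (say), then $u\equiv\psi$ there and the estimate is just the $C^{1,\b_1}$-regularity of $\psi$ from \eqref{Aregob2}. (iii) \emph{Near the free boundary}, the genuinely obstacle-problem case: here I would show $u-\psi$ (resp.\ $u-\phi$) is, on an appropriate ball, a nonnegative supersolution of $\P^+(D^2w)+\mu|Dw|-\tilde f=0$ with $\tilde f$ controlled, apply the weak Harnack inequality (Proposition \ref{prop:WH}) to get that $u$ detaches from $\psi$ at a controlled rate, and combine with the local maximum principle (Proposition \ref{prop:LMP}) on the non-coincidence side; the outcome is an oscillation-decay inequality
$$
\mathrm{osc}_{B_{\rho}(x_0)}\bigl(u-\ell\bigr)\le \eta\,\mathrm{osc}_{B_{2\rho}(x_0)}\bigl(u-\ell\bigr)+C\rho^{1+\b_2}
$$
for suitable affine $\ell$ (the tangent plane of the active obstacle), uniformly as $x_0$ ranges over the free boundary, which by the standard iteration (Lemma 8.23 in \cite{GT}, as already used in the proof of Proposition \ref{prop:Holder}) upgrades to the pointwise $C^{1,\b_2}$ bound on $Du$ near $x_0$. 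The three cases are then patched by a Campanato-type / dyadic covering argument: for an arbitrary pair $x,y\in\O_\e$, compare $\mathrm{dist}$ of $x$ to the free boundary with $|x-y|$ and route through whichever of (i)--(iii) applies, summing the resulting geometric series to obtain the global seminorm bound on $\O_\e$ with exponent $\b_3=\b_2$ (or any $\b<\hat\b$ when $\b_2\ge\hat\b$, since one cannot beat the interior fully nonlinear exponent).

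The main obstacle I expect is case (iii) near the free boundary, and specifically making the weak Harnack argument yield an \emph{affine} (not merely Hölder) detachment rate: this is exactly the point flagged in the Remark after Proposition \ref{prop:Holder} — one must choose $M_s,m_s$ relative to the tangent plane of the active obstacle rather than relative to constants, so that the quantities $u-\ell$ and the obstacle's deviation from its own tangent plane both enter the weak Harnack estimate at the scale $\rho^{1+\b_2}$. Keeping the constants in Propositions \ref{prop:WH} and \ref{prop:LMP} uniform requires the bound on $\|\mu\|_{L^q(B_{2r})}$ to be used at small scales $r$, which is automatic since it only decreases, and the smallness of $\d_0$ in \eqref{Aconti} must be chosen after the iteration ratio $\eta$ is fixed. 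A secondary technical nuisance is that in case (i) the radius $r$ over which $u$ solves the free equation can be small, so the estimate must be stated in the scale-invariant form $(|x-y|/r)^{\hat\b}$ and the ``bad'' factors $r^{-\b_3}$ reabsorbed when one reaches a free-boundary point; this bookkeeping is routine but is where the precise value of $\b_3$ — the minimum of the obstacle exponent $\b_2$ and the intrinsic exponent $\hat\b$ — is forced.
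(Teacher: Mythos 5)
Your plan is essentially the paper's own proof: an interior $C^{1,\hat\b}$-type estimate in the non-coincidence set via the frozen-coefficient approximation under the smallness condition \eqref{Aconti} (Proposition \ref{prop:C1Holder}), a detachment estimate at coincidence points obtained by applying the weak Harnack inequality and the local maximum principle to $u$ minus the tangent plane of the active obstacle, with \eqref{Aobstacles2} decoupling the two obstacles (Lemma \ref{lem:C1Holder}), and a final patching that compares $|x-y|$ with the distance to the coincidence set. The only differences are presentational: the paper gets the $r^{1+\b_2}$ affine detachment in a single weak Harnack/local-maximum-principle step rather than by an oscillation-decay iteration, and it patches by a two-case triangle inequality through the nearest coincidence points (using $Du=D\phi$ or $D\psi$ there) rather than a Campanato-type covering.
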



\section{Global equi-continuity estimates}

In what follows, assuming \eqref{Aregob}, 
we denote by $\s_0$ the modulus of continuity of $\varphi$ and $ \psi$ in $\ol\O$;
$$
\s_0(r):=\max\{ |\phi (x)-\phi(y)|\vee |\psi(x)-\psi(y)| \ | \ |x-y|\leq r, \ 
x,y\in \ol\O\}.
$$
 

\subsection{Local estimates}

We first show the local equi-continuity estimate on $L^p$-viscosity solutions 
of \eqref{eq:1}. 


\begin{lem}\label{lem:L}
Assume \eqref{Apq}, \eqref{Af}, \eqref{Amu}, \eqref{UE}, \eqref{Azero},  \eqref{Aobstacles} 
 and \eqref{Aregob}. 
For any 
$\O'\Subset \O$, 
there exists a modulus of continuity $\o_0$ such that if $u\in C(\O)$ is an $L^p$-viscosity solution of \eqref{eq:1}, 
then it follows that 
$$
|u(x)-u(y)|\leq \o_0(|x-y|)\quad \mbox{for }x,y\in\ol\O'.
$$
 \end{lem}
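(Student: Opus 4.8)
The plan is to adapt the oscillation-decay argument from the proof of Proposition~\ref{prop:Holder}, but with the test supersolutions modified near the coincidence sets so that the weak Harnack inequality (Proposition~\ref{prop:WH}) can still be applied. Fix $\O'\Subset\O$ and set $d_0:=\mathrm{dist}(\O',\p\O)$; all balls below will have radius comparable to a small parameter $r\le d_0/4$, so they stay inside $\O$. Fix $x\in\ol\O'$ and, as before, write $M_s:=\sup_{B_s(x)}u$, $m_s:=\inf_{B_s(x)}u$, and $\o(s):=M_s-m_s$. The key new point (flagged in the Remark after Proposition~\ref{prop:Holder}) is that instead of using $U=u-m_{2r}$ and $V=M_{2r}-u$ directly, we truncate against the obstacles: we work with
$$
U:=(u-m_{2r})\wedge(\psi-m_{2r})^+,\qquad V:=(M_{2r}-u)\wedge(M_{2r}-\phi)^+
\quad\mbox{in }B_{2r}(x),
$$
or, more precisely, we use Proposition~\ref{rem:1} to see that $u\vee\g$ is an $L^p$-viscosity supersolution of $\P^+(D^2w)+\mu|Dw|+f^-=0$ on the open set where $\g\le\psi$, and dually for $u\wedge\g$. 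Applying this with $\g=m_{2r}$ on the set where $m_{2r}\le\psi$ and with $\g=M_{2r}$ on the set where $M_{2r}\ge\phi$ produces the nonnegative supersolutions of \eqref{eq:P+} (with $f$ replaced by $-f^-$ and $-f^+$ respectively) that Proposition~\ref{prop:WH} needs.

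The next step is to run the same chain of inequalities as in Proposition~\ref{prop:Holder}: apply Proposition~\ref{prop:WH} to each of $U$ and $V$ on $B_r(x)\subset B_{2r}(x)$, add the two estimates using the quasi-triangle inequality \eqref{eq:tri}, and bound $\|M_{2r}-m_{2r}\|_{L^{\e_0}(B_r(x))}$ from below by $|B_r|^{1/\e_0}(M_{2r}-m_{2r})$ — but now the cross terms $\inf_{B_r(x)}U$ and $\inf_{B_r(x)}V$ are controlled only up to an error coming from the truncation. Concretely, on $B_r(x)$ we have either $u-m_{2r}\le\psi-m_{2r}$ pointwise (in which case $U$ agrees with $u-m_{2r}$ and $\inf U=m_r-m_{2r}$), or the truncation is active somewhere, in which case $\inf_{B_r(x)}(\psi-m_{2r})^+\le\s_0(2r)+|\psi(x)-\psi(y_0)|$-type quantities for suitable points, i.e. the error is bounded by the modulus of continuity $\s_0$ of the obstacles on scale $2r$. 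Either way one gets
$$
\o(r)\le\theta_0\,\o(2r)+C\bigl(r^{\a_0}\|f\|_{L^{p\wedge n}(\O)}+\s_0(2r)\bigr)
$$
for some $\theta_0\in(0,1)$ and $C$ depending only on the data (through $n,\L/\l,p,q,\|\mu\|_{L^q(\O)}$), uniformly for $x\in\ol\O'$ and $0<r\le d_0/4$.

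Finally, the conclusion follows from a standard iteration of this discrete inequality (e.g.\ Lemma~8.23 in \cite{GT}, in the version allowing a non-power but merely nondecreasing forcing term $\s_0$): iterating $\o(r)\le\theta_0\o(2r)+C(r^{\a_0}\|f\|+\s_0(2r))$ yields a modulus of continuity $\o_0$ — depending on $\theta_0$, $\|f\|_{L^{p\wedge n}(\O)}$, $\s_0$ and $\mathrm{osc}_{\O}u$, but not on the particular solution $u$ beyond a locally uniform $L^\infty$ bound — such that $|u(x)-u(y)|\le\o_0(|x-y|)$ for $x,y\in\ol\O'$; the required locally uniform sup-bound on $u$ is itself an easy consequence of Proposition~\ref{prop:phi_u_psi}, which pins $u$ between $\phi$ and $\psi$. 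I expect the main obstacle to be the bookkeeping around the truncation: one must verify carefully that $u\vee m_{2r}$ (resp.\ $u\wedge M_{2r}$) is genuinely an $L^p$-viscosity supersolution of the relevant Pucci extremal inequality on the correct open subset — this is where Proposition~\ref{rem:1} is invoked — and that the truncation error is genuinely of size $O(\s_0(2r))$ rather than something uncontrolled, which requires relating $\inf_{B_r(x)}U$ and $m_r-m_{2r}$ precisely when the obstacle constraint is active. Everything else is a routine transcription of the Hölder-estimate argument already carried out in Proposition~\ref{prop:Holder}.
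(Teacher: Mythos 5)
Your overall skeleton is the paper's: truncate $u$ against the obstacles so that Proposition \ref{rem:1} and the weak Harnack inequality (Proposition \ref{prop:WH}) apply, accept an error of size $\s_0(2r)$, derive $\o(r)\le\theta_0\o(2r)+C(r^{\a_0}\|f\|_{L^{p\wedge n}(\O)}+\s_0(2r))$, and iterate as in Lemma 8.23 of \cite{GT}. But the one step that is actually new compared with Proposition \ref{prop:Holder} — producing genuine nonnegative $L^p$-viscosity supersolutions of \eqref{eq:P+} on the \emph{whole} ball $B_{2r}(x_0)$ — is carried out incorrectly. First, you misquote Proposition \ref{rem:1}: it says $u\vee\g$ is a \emph{subsolution} of $\P^-(D^2u)-\mu|Du|-f^+=0$ on a set where $\g\ge\phi$, and $u\wedge\g$ is a \emph{supersolution} of $\P^+(D^2u)+\mu|Du|+f^-=0$ on a set where $\g\le\psi$; your pairing ($u\vee\g$ a supersolution where $\g\le\psi$) is not the statement and is false in general. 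Second, your truncation levels $\g=m_{2r}$ and $\g=M_{2r}$ do not work: since $m_{2r}\le u\le M_{2r}$ in $B_{2r}$ one has $u\vee m_{2r}=u\wedge M_{2r}=u$ there (and likewise your displayed $U=(u-m_{2r})\wedge(\psi-m_{2r})^+$ equals $u-m_{2r}$ because $\phi\le u\le\psi$), so the truncation is vacuous and you are left with the untruncated $U=u-m_{2r}$, $V=M_{2r}-u$. These are \emph{not} supersolutions of \eqref{eq:P+} across the coincidence sets: e.g.\ in one dimension with $F(X)=-\mathrm{tr}X$, $f=0$, $\psi(x)=x^2$, $\phi\equiv-10$, the solution coincides with $\psi$ on an interval, where $\P^+(D^2u)+f^-=-2\l<0$, so the weak Harnack cannot be applied to $u-m_{2r}$ on $B_{2r}$. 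Nor can you rescue this by applying Proposition \ref{rem:1} with $\g=m_{2r}$ or $M_{2r}$ on the full ball, because the needed pointwise inequalities $m_{2r}\ge\phi$, $M_{2r}\le\psi$ on $B_{2r}$ fail in general (the conditions $m_{2r}\le\psi$, $M_{2r}\ge\phi$ that you check are automatic and irrelevant); and applying it only on an unknown subset is useless since Proposition \ref{prop:WH} needs the supersolution property on all of $B_{2r}$.

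The paper's proof fixes exactly this point, and it is the content of Remark \ref{rem:Holder}'s predecessor (Remark 2.6): choose the constants $\g_+:=\phi(x_0)+\s_0(2r)$ and $\g_-:=\psi(x_0)-\s_0(2r)$, which by the modulus of continuity of the obstacles satisfy $\g_+\ge\phi$ and $\g_-\le\psi$ on all of $B_{2r}(x_0)$, so Proposition \ref{rem:1} makes $u_+:=u\vee\g_+$ a subsolution and $u_-:=u\wedge\g_-$ a supersolution of the respective Pucci inequalities on the whole ball. One then \emph{redefines} $M_s:=\sup_{B_s}u_+$ and $m_s:=\inf_{B_s}u_-$ (not the sup/inf of $u$), applies Proposition \ref{prop:WH} to $U=M_{2r}-u_+$ and $V=u_--m_{2r}$, and controls the truncation error via Proposition \ref{prop:phi_u_psi}: $u_+-u\le 2\s_0(2r)$ and $u-u_-\le2\s_0(2r)$, so $M_{2r}-m_{2r}\le U+V+4\s_0(2r)$; finally $u(x)-u(y)\le u_+(x)-u_-(y)$ transfers the decay of $M_s-m_s$ back to the oscillation of $u$. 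Your iteration step and the uniform bound on $u$ from $\phi\le u\le\psi$ are fine, but without the correct choice of truncation constants the decay inequality you assert is not established.
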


\begin{proof}
Let $r\in(0, \fr{\d_0}{2})$, where $\d_0:=\mbox{dist}(\O',\p\O)$, and $x_0\in\ol \O'$. 
We may suppose $x_0=0$ as before. 
Setting  $\s_0:=\s_0(2r)$,  we define 
$$
u_+:=  u\vee (\phi(0)+\s_0)\quad\mbox{and}\quad u_-:= u\wedge (\psi(0)-\s_0).
$$
By noting $\phi(0)+\s_0 \geq  \phi$ and $\psi\geq \psi(0)-\s_0$ in $B_{2r}$, 
Proposition \ref{rem:1} shows that  
$u_+$ and $u_-$ are, respectively, an $L^p$-viscosity subsolution 
and supersolution of
$$
\P^-(D^2u)-\mu |Du|-f^+=0\quad\mbox{and}\quad \P^+(D^2u)+\mu|Du|+f^-=0\quad\mbox{in }B_{2r}.
$$

Now, for $s\in (0,\fr{\d_0}{2})$, setting 
$$ M_s:=\underset{B_{s}}{\sup} \ u_+, \quad\mbox{and}\quad 
m_s:=\underset{B_{s}}{\inf} \ u_-,$$
we define
$$
 U:=M_{2r}-u_+
,\quad\mbox{and}\quad  V:=u_--m_{2r}
$$
for $r\in (0,\fr{\d_0}{2})$.  

It is easy to 
see that $U$ and $V$ are, respectively, 
nonnegative $L^p$-viscosity 
supersolutions of 
$$
\P^+(D^2u)+ \mu|Du|+ f^\pm=0\quad\mbox{in }B_{2r}. 
$$
Hence, by Proposition \ref{prop:WH}, 
we have 
\begin{equation}\label{wH1}
\| U\|_{L^{\e_0}(B_r)}\leq Cr^{\fr{n}{\e_0}}\le(\inf_{B_r} \ U+r^{\a_0}\| f^+\|_{L^{p\wedge n}(\O)}\ri)
,
\end{equation}
and
\begin{equation}\label{wH2}
\| V\|_{L^{\e_0}(B_r)}\leq Cr^{\fr{n}{\e_0}}\le(\inf_{B_r} \ V+r^{\a_0}\| f^-\|_{L^{p\wedge n}(\O)}\ri)
.
\end{equation}
Here and later, $C>0$ denotes the various constant depending only on known quantities. 
Since $
M_{2r}-m_{2r}=U+(u_+-u)+(u-u_-)+V\leq 
U+4\s_0+V$ by Proposition \ref{prop:phi_u_psi}, we have
$$
M_{2r}-m_{2r}\leq Cr^{-\fr{n}{\e_0}}\le(
\| U\|_{L^{\e_0}(B_r)}+\s_0 r^{\fr{n}{\e_0}}+\| V\|_{L^{\e_0}(B_r)}\ri)
.
$$
Combining this with \eqref{wH1} and \eqref{wH2}, 
we find $\theta_0\in (0,1)$ such that
$$
M_r-m_r\leq \theta_0\le( M_{2r}-m_{2r}\ri) +r^{\a_0}
\| f\|_{L^{p\wedge n}(\O)}+\s_0(2r). 
$$
We note here that 
$$
u(x)-u(y)\leq u_+(x)- u_-(y)\quad\mbox{for }x,y\in B_{2r}.$$
Therefore, as for Proposition \ref{prop:Holder} with Lemma 8.23 in \cite{GT}, 
it is standard to find a modulus of continuity $\o_0$ in the conclusion. 
\end{proof}

 
\begin{rem}\label{rem:Holder}
As noted in Section 2.2, if we suppose $\phi,\psi\in C^{\a_1}(\O)$ for $\a_1\in (0,1)$, then we can show $u\in C^{\a_2}(\ol\O')$ for some 
$\a_2\in (0,\a_0\wedge \a_1]$ because we can choose $\s_0(r)=Cr^{\a_1}$ for some $C>0$ in the above. 
\end{rem}


\subsection{Equi-continuity near  $\p \O$}

To state equi-continuity near $\p\O$, we shall use the following notion: for small $\e>0$, 
$$\O_\e :=\{x\in\O \ | \ \mbox{dist}(x,\p\O)>\e\} .$$


\begin{lem}\label{lem:B}
Assume \eqref{Apq}, \eqref{Af}, \eqref{Amu}, \eqref{UE}, \eqref{Azero},  \eqref{Aobstacles},  \eqref{Aregob}, \eqref{Aboundary} and 
\eqref{AregDirichlet}. 
For small $\e>0$, there exists a modulus of continuity $\o_0$ such that 
if  an $L^p$-viscosity solution $u\in C(\ol\O)$ of \eqref{eq:1}
 satisfies \eqref{BC}, 
then it follows that 
$$
|u(x)-u(y)|\leq \o_0(|x-y|)\quad \mbox{for }x,y\in \ol\O\setminus \O_{\e}.
$$
\end{lem}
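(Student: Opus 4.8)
The plan is to reduce the boundary estimate to a barrier argument combined with the interior local estimate of Lemma~\ref{lem:L}. First I would fix a boundary point $x_0\in\p\O$ and, as in Proposition~\ref{prop:Holder}, exploit the thickness condition \eqref{Aboundary}: because $|B_\rho(x_0)\setminus\O|\ge\Theta_0\rho^n$ for $\rho\in(0,R_0)$, the weak Harnack inequality (Proposition~\ref{prop:WH}) applied to suitable nonnegative super/subsolutions built from $M_\rho-u_+$ and $u_--m_\rho$ (where $u_\pm$ are the obstacle-truncations used in Lemma~\ref{lem:L}, now centered at $x_0$) picks up a genuine contribution from the part of the ball lying outside $\O$, where $u$ is pinned to $g$. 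This yields an oscillation-decay inequality of the form $\omega(\rho)\le\theta_0\,\omega(2\rho)+\sigma_0(2\rho)+C\rho^{\a_0}\|f\|_{L^{p\wedge n}(\O)}+\s_g(2\rho)$, where $\s_g$ is the modulus of continuity of $g$ on $\p\O$ from \eqref{AregDirichlet} and $\omega(\rho):=\sup_{B_\rho(x_0)\cap\OO}u-\inf_{B_\rho(x_0)\cap\OO}u$. Iterating this (Lemma~8.23 in \cite{GT}, exactly as in Proposition~\ref{prop:Holder}) produces a modulus of continuity $\o_1$ such that $|u(x)-u(x_0)|\le\o_1(|x-x_0|)$ for all $x\in\OO$ near $x_0$, uniformly in $x_0\in\p\O$.

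The slightly delicate point in that step is making $u_\pm$ behave correctly up to the boundary: one truncates $u$ at the levels $\phi(x_0)+\s_0(2\rho)$ and $\psi(x_0)-\s_0(2\rho)$ so that Proposition~\ref{rem:1} applies on $B_{2\rho}(x_0)\cap\O$, while on $B_{2\rho}(x_0)\setminus\O$ one works with the extension that equals $g$; since $g$ is only continuous on $\p\O$, I would first extend $g$ to a continuous function $\tilde g$ on $\OO$ (Tietze), note $\phi\le\tilde g\le\psi$ can be arranged near $\p\O$ up to an error controlled by $\s_0$ and $\s_g$, and carry that error through. This is where \eqref{Aobstacles} (the compatibility $\phi\le g\le\psi$ on $\p\O$) is used, and it is the main obstacle: one has to be careful that the super/subsolutions are defined and nonnegative on the full ball $B_{2\rho}(x_0)$, not merely on its intersection with $\O$, so that Proposition~\ref{prop:WH} is legitimately applicable; the thickness of $\R^n\setminus\O$ is precisely what lets the infimum over $B_\rho(x_0)$ be taken over points outside $\O$ where the value is essentially $g(x_0)$.

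Having the pointwise-at-the-boundary modulus $\o_1$, I would then upgrade it to the stated two-point estimate on the closed collar $\OO\setminus\O_\e$. Given $x,y\in\OO\setminus\O_\e$, if $|x-y|$ is large (say $\ge$ some fixed $\rho_0$ depending on $\e$) there is nothing to prove after adjusting $\o_0$ to be $\ge2\|u\|_{L^\infty}$; if $|x-y|$ is small, pick boundary points $x',y'\in\p\O$ with $|x-x'|,|y-y'|\le C\e$—actually, since each of $x,y$ lies within $\e$ of $\p\O$, choose nearest boundary points and combine: $|u(x)-u(y)|\le|u(x)-u(x')|+|u(x')-u(y')|+|u(y')-u(y)|$, bound the first and third terms by $\o_1(\e)$... which is not small. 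So instead I would argue by cases on whether $|x-y|\le\e/2$: then both $x,y$ lie in a ball of radius $\e$ around a common boundary point only in the bad case; the clean route is to split $\OO\setminus\O_\e = (\OO\setminus\O_{2\e})\cup(\O_\e\setminus\O_{2\e})$ and use Lemma~\ref{lem:L} with $\O'=\O_{2\e}\Subset\O$ for pairs staying in $\O_{2\e}$, the boundary modulus $\o_1$ for pairs with one point within $2\e$ of $\p\O$ together with the triangle inequality through a near-boundary reference point, and finally take $\o_0$ to be (a constant multiple of) $\o_1+\o_0^{\mathrm{int}}$ plus the trivial large-scale bound. I expect this gluing to be routine bookkeeping; the real content is the barrier/weak-Harnack argument at $\p\O$ in the first two paragraphs.
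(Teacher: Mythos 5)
Your overall skeleton is the same as the paper's: truncate at the obstacle levels as in Lemma \ref{lem:L}, extend across $\p\O$, apply Proposition \ref{prop:WH} on full balls centered at boundary points, obtain the oscillation-decay inequality you state, iterate via Lemma 8.23 of \cite{GT}, and glue with Lemma \ref{lem:L}. But the step you yourself single out as the main obstacle is where your argument breaks down. Extending $u$ outside $\O$ by a Tietze extension $\tilde g$ of $g$ does not make the functions built from $M_{2r}-u_+$ and $u_--m_{2r}$ into $L^p$-viscosity supersolutions on the whole ball $B_{2r}(x_0)$: an arbitrary continuous extension satisfies no differential inequality across $\p\O$, so Proposition \ref{prop:WH} cannot legitimately be applied to your $U$ and $V$. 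Moreover, the role you assign to \eqref{Aboundary} is inverted. The thickness of $B_r(x_0)\setminus\O$ is not used to place the infimum on the right-hand side of the weak Harnack inequality at exterior points where $u\approx g(x_0)$; read that way one would only bound an interior average by the boundary deficit, which does not produce the decay $\o(r)\leq\theta_0\o(2r)+\ldots$ you assert.

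The paper's construction repairs both points at once. With $M_s:=\sup_{B_s\cap\O}u_+$ and $m_s:=\inf_{B_s\cap\O}u_-$, set $c_+:=M_{2r}-\sup_{B_{2r}\cap\p\O}u_+$ and $c_-:=\inf_{B_{2r}\cap\p\O}u_--m_{2r}$, and define $U:=(M_{2r}-u_+)\wedge c_+$ in $B_{2r}\cap\O$, $U:=c_+$ in $B_{2r}\setminus\O$ (similarly $V$ with $c_-$). Because the extension is by the constant determined by the boundary values, $U$ and $V$ are nonnegative $L^p$-viscosity supersolutions of $\P^+(D^2w)+\hat\mu|Dw|+|\hat f|=0$ in all of $B_{2r}$, where $\hat\mu,\hat f$ are the zero extensions of $\mu,f$; this, not a Tietze extension, is what makes Proposition \ref{prop:WH} applicable. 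The exterior density condition then enters on the \emph{left-hand side} of the weak Harnack inequality: since $U\equiv c_+$ on $B_r\setminus\O$ and $|B_r\setminus\O|\geq\Theta_0r^n$, one gets $\Theta_0^{1/\e_0}c_+\leq C\bigl(\inf_{B_r}U+r^{\a_0}\|f\|_{L^{p\wedge n}(\O)}\bigr)\leq C\bigl(M_{2r}-\sup_{B_r\cap\O}u_+ +r^{\a_0}\|f\|_{L^{p\wedge n}(\O)}\bigr)$, and the analogous bound for $c_-$. Adding the two, and using $u=g$ on $\p\O$ together with $\phi\leq u\leq\psi$ (Proposition \ref{prop:phi_u_psi}) to get $c_++c_-\geq M_{2r}-m_{2r}-\o_g(4r)-4\s_0(2r)$, yields exactly your decay inequality with $\theta_0=1-\Theta_0^{1/\e_0}/C$. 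With this replacement, your iteration and the final bookkeeping that combines the boundary modulus with Lemma \ref{lem:L} go through essentially as you describe.
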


\begin{proof}
Let $r\in(0, \fr{\e}{2})$ and $x_0\in\p \O$. 
We may suppose $x_0=0\in\p\O$. 
As in the proof of Lemma \ref{lem:L}, we set 
$$
u_+:=  u\vee (\phi(0)+\s_0)\quad\mbox{and}\quad u_-:=  u\wedge (\psi(0)-\s_0),
$$
where $\s_0:=\s_0(2r)$. 
In view of Proposition \ref{rem:1} again, we see that 
$u_+$ and $u_-$ are, respectively, an $L^p$-viscosity subsolution 
and supersolution of
$$
\P^-(D^2u)-\mu |Du|-f^+=0\quad\mbox{and}\quad \P^+(D^2u)+\mu|Du|+f^-=0\quad\mbox{in }B_{2r}\cap\O.
$$
Now, as in \cite{GT, KS3} for instance, setting 
$$M_s:=\underset{B_{s}\cap \O}{\sup}u_+, \quad\mbox{and}\quad 
m_s:=\underset{B_{s}\cap \O}{\inf}u_-,$$
we define
$$
U:=\le\{\begin{array}{ll}
(M_{2r}-u_+)\wedge c_+&\mbox{in }B_{2r}\cap \O,\\
c_+&\mbox{in }B_{2r}\setminus \O,
\end{array}\ri.
$$
and
$$
V:=\le\{\begin{array}{ll}
(u_--m_{2r})\wedge c_-&\mbox{in }B_{2r}\cap \O,\\
c_-&\mbox{in }B_{2r}\setminus \O,
\end{array}\ri.$$
where nonnegative constants $c_\pm$ are given by
$$
c_+:=M_{2r}-\sup_{B_{2r}\cap \p\O}u_+\quad\mbox{and}\quad 
c_-:=\inf_{B_{2r}\cap \p\O} u_--m_{2r}.
$$
Hence, it is easy to 
see that $U$ and $V$ are 
nonnegative $L^p$-viscosity 
supersolutions of 
$$
\P^+(D^2u)+\hat \mu|Du|+|\hat f|=0\quad\mbox{in }B_{2r},
$$
where $\hat \mu$ and $\hat f$ are zero extensions of $\mu$ and $f$  
 outside of $\O$, respectively. 
Hence, by Proposition \ref{prop:WH}, 
we have 
$$
 \Theta_0^{\fr{1}{\e_0}} \le( M_{2r}-\sup_{B_{2r}\cap \p\O}  u_+
 \ri) \leq C\le( M_{2r}-\sup_{B_r\cap \O} u_++r^{\a_0}\| f\|_{L^{p\wedge n}
(\O)}\ri)  
$$
and
$$
\Theta_0^{\fr{1}{\e_0}}\le( \inf_{B_{2r}\cap \p\O} u_- -m_{2r}\ri)\leq 
C\le(\inf_{B_r\cap \O} u_--m_{2r}+r^{\a_0}\| f\|_{L^{p\wedge n}(\O)}\ri)
. 
$$
As in the proof of Lemma \ref{lem:L}, these inequalities imply that there is $\theta_0\in (0,1)$ such that
$$
M_r-m_r\leq \theta_0\le( M_{2r}-m_{2r}\ri) +2r^{\a_0}
\| f\|_{L^{p\wedge n}(\O)}+\s_0(2r)+\o_g(2r),
$$
where $\o_g(r):=\sup\{ |g(x)-g(y)| \ | \ |x-y|\leq r, \ x,y\in \p\O\}$. 
Therefore, noting that 
$u(x)-u(y)\leq u_+(x)-u_-(y)$ for $x,y\in B_{2r}\cap\O$, 
 as before, we can find a modulus of continuity $\o_0$ in the assertion. 
\end{proof}


\begin{rem}
As in Remark \ref{rem:Holder}, if we suppose 
$$
\phi,\psi\in C^{\a_1}(\ol\O\setminus\O_{2\e}),\quad\mbox{and}\quad g\in C^{\a_1}(\p\O)
\quad\mbox{for }\a_1\in (0,1),$$
then $u\in C^{\a_2}(\ol\O\setminus\O_\e)$ holds for some $\a_2\in (0,\a_0\wedge \a_1]$. 
\end{rem}

\begin{proof}[Proof of theorem \ref{corEst}] 
In view of Lemmas \ref{lem:L} and \ref{lem:B}, we immediately obtain the assertion. 
\end{proof}


\section{Existence results}

In this section, we present an existence result of 
$L^p$-viscosity solutions of \eqref{eq:1} under suitable conditions 
when obstacles are merely continuous.  

Using the standard mollifier by $\rho\in C^\infty_0(\R^n)$, we introduce smooth approximations of $f$ and $F$ by
$$
f_\e:=f*\rho_\e, \ \mu_\e:=\mu*\rho_\e\quad\mbox{and}\quad 
F_\e(x,\xi,X):=\int_{\R^n}\rho_\e (x-y)F(y,\xi,X)dy$$
for $(x,\xi,X)\in\R^n\ti\R^n\ti S^n$, where $\rho_\e(x)=\e^{-n}\rho(x/\e)$. 
Here and later, we use the same notion $f$ and $F$ for their zero extension outside of $\O$. 
Under \eqref{Af}, \eqref{Amu},  \eqref{UE} and \eqref{Azero}, 
it is easy to observe that for $(x,\xi,X)\in\R^n\ti \R^n\ti S^n$, 
\begin{equation}\label{ApproxProp}
\le\{\begin{array}{ll}
(i)&\P^-(X)-\mu_\e (x)|\xi|\leq F_\e(x,\xi,X)\leq \P^+(X)+\mu_\e (x)|\xi|,\\
(ii)&\| f_\e\|_{L^p(\R^n)}\leq \| f\|_{L^p(\O)},\\
(iii)&\|\mu_\e\|_{L^q(\R^n)}\leq \| \mu\|_{L^q(\O)}.
\end{array}
\ri.
\end{equation}

Furthermore, we shall suppose that $\phi$ and $\psi$ are defined in a neighborhood of $\ol\O$ with the same modulus of continuity. 
More precisely,  there is $\e_1>0$ such that
\begin{equation}\label{Aphipsi}
\max\{ |\phi(x)-\phi(y)|, |\psi(x)-\psi(y)|\}\leq \s_0(|x-y|)
\quad\mbox{for }x,y\in N_{\e_1},
\end{equation}
where $N_{\e_1}:=\{ x\in \R^n \ | \ \mbox{dist}(x,\O)<\e_1\}$ is a neighborhood of $\ol\O$. 
Under \eqref{Aphipsi}, we define $\phi_\e$ and $\psi_\e$ as follows:
$$
\phi_\e:=\phi*\rho_\e-\s_0(\e),\quad
\psi_\e:=\psi*\rho_\e +\s_0(\e).$$
It is easy to see that for $\e\in (0,\e_1)$, 
$$
\phi_\e\leq g\leq\psi_\e\quad\mbox{on }\p\O,$$
and
$$
\max\{|\phi_\e (x)-\phi_\e(y)|, |\psi_\e(x)-\psi_\e(y)|\}\leq \s_0(|x-y|)\quad \mbox{for }x,y\in\ol\O .
$$


We shall consider approximate equations:
\begin{equation}\label{Approximate}
F_\e (x,Du,D^2u)+\fr 1\d (u-\psi_\e)^+-\fr 1\d (\phi_\e-u)^+=f_\e\quad\mbox{in }\O.
\end{equation}

In order to apply an existence result in \cite{CKLS}, we shall suppose the uniform exterior cone condition on $\p\O$ in \cite{Miller}, which is stronger than \eqref{Aboundary}. 


\begin{prop}\label{prop:Exist}
Under \eqref{Apq}, \eqref{Af}, \eqref{Amu}, \eqref{UE}, \eqref{Azero}, \eqref{Aobstacles}, \eqref{Aregob},   \eqref{AregDirichlet} and \eqref{Aphipsi}, we assume the uniform exterior cone condition on $\O$. 
Then, there exists a $C$-viscosity solution $u^\d_\e\in C(\ol\O)$ of \eqref{Approximate} satisfying \eqref{BC}. 
\end{prop}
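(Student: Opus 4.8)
The plan is to solve \eqref{Approximate} as a single nonlinear PDE, treating the penalized bilateral obstacle terms as part of the zeroth-order structure, and to invoke the existence theorem for $C$-viscosity solutions of fully nonlinear uniformly elliptic equations under the exterior cone condition (the Perron/barrier method in \cite{CKLS,Miller}). First I would set, for fixed $\e,\d>0$,
$$
G^\d_\e(x,r,\xi,X):=F_\e(x,\xi,X)+\tfrac1\d(r-\psi_\e(x))^+-\tfrac1\d(\phi_\e(x)-r)^+-f_\e(x),
$$
and check that $G^\d_\e$ is continuous on $\OO\ti\R\ti\R^n\ti S^n$: indeed $F_\e$ is continuous by mollification, $f_\e,\phi_\e,\psi_\e$ are continuous, and the maps $r\mapsto(r-\psi_\e(x))^+$ and $r\mapsto(\phi_\e(x)-r)^+$ are (globally Lipschitz) continuous. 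Next I would verify the structural conditions required by the existence theorem: uniform ellipticity of $G^\d_\e$ in $X$ is inherited from \eqref{ApproxProp}(i); properness (monotonicity in $r$) holds because the two penalty terms are nondecreasing in $r$ (the first increases, the minus-second also increases) and $F_\e$ does not depend on $r$; and the first-order dependence is controlled by the bounded coefficient $\mu_\e\in C(\OO)\cap L^q$ via \eqref{ApproxProp}(i),(iii). So the hypotheses of the existence result of \cite{CKLS} are met.

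For the comparison principle and the construction of sub/supersolutions (barriers), the key point is that $G^\d_\e$ has a strictly positive "gap" in $r$ away from the interval $[\phi_\e,\psi_\e]$: for $r$ large, $G^\d_\e\ge \tfrac1\d(r-\|\psi_\e\|_\infty)^+-\|f_\e\|_\infty+\P^-(X)-\mu_\e|\xi|$, which forces any subsolution to stay below a constant, and symmetrically any supersolution to stay above a constant. Concretely I would produce global sub- and supersolutions of the form $\pm(C_0+C_1 w(x))$ where $C_0$ is large enough to absorb $\|f_\e\|_\infty$, $\|\phi_\e\|_\infty$, $\|\psi_\e\|_\infty$ and $w\ge0$ is a smooth function with $\P^-(D^2w)-\mu_\e|Dw|\ge 1$ on $\OO$ (e.g. $w(x)=e^{A|x|^2}$-type for $A=A(\l,\L,\|\mu_\e\|_\infty,\mathrm{diam}\,\O)$ large), so that these dominate/are-dominated-by $g$ on $\p\O$; this gives an ordered pair of barriers. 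The exterior cone condition then provides local barriers at each boundary point (this is exactly Miller's construction in \cite{Miller}), ensuring the Perron solution attains the boundary value $g$ continuously, i.e. $u^\d_\e\in C(\OO)$ with $u^\d_\e=g$ on $\p\O$.

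The main obstacle is the comparison principle for $G^\d_\e$, since the equation has a merely $L^q$ (after mollification, continuous but not bounded in a dimension-free way) first-order coefficient $\mu_\e$ and we want a genuinely $C$-viscosity (not just $L^p$-viscosity) statement; however, since $\mu_\e$ is now a \emph{continuous} function, and $F_\e,f_\e,\phi_\e,\psi_\e$ are continuous, the standard doubling-of-variables argument from \cite{CIL} applies after noting that the two penalty terms are Lipschitz in $r$ uniformly in $x$ (so they only help monotonicity), and the $\mu_\e|\xi-\eta|$ term is handled by the usual exponential change of variables — this is precisely the setting covered by the existence/comparison machinery of \cite{CKLS}. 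Once comparison and the barriers are in hand, Perron's method yields the desired $u^\d_\e$, completing the proof; I would simply cite \cite{CKLS,Miller} for these now-standard facts and spell out only the verification that $G^\d_\e$ satisfies their hypotheses.
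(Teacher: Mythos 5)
Your proposal is correct and essentially coincides with the paper's treatment: the paper offers no separate argument for Proposition \ref{prop:Exist} beyond checking (via \eqref{ApproxProp} and the continuity of $F_\e$, $f_\e$, $\mu_\e$, $\phi_\e$, $\psi_\e$) that the penalized operator in \eqref{Approximate} is continuous, uniformly elliptic and proper in $u$, and then invoking the existence result of \cite{CKLS} under the uniform exterior cone condition of \cite{Miller}, which is exactly what you do. One caution: your aside that comparison for $G^\d_\e$ follows from the standard doubling argument of \cite{CIL} is an overstatement — the structure condition in $x$ for the mollified fully nonlinear $F_\e$ (Lipschitz with a factor $1+\|X\|$) is precisely the delicate point, and \cite{CKLS} obtains existence by approximation and stability of $L^p$-viscosity solutions rather than by Perron's method with comparison — but since you ultimately defer to \cite{CKLS} for these facts, this does not affect the validity of the argument.
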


We first show an existence result for \eqref{eq:1} when $\phi$, $\psi$ and $F$ are smooth. 


\begin{thm}\label{thm:Approx}(cf. Theorem 1.1 in \cite{CKLS}) 
Under the same hypotheses in Proposition \ref{prop:Exist}, let 
$u^\d_\e\in C(\ol\O)$ be $C$-viscosity solutions of \eqref{Approximate} 
satisfying \eqref{BC}. 
For each small $\e>0$, there exist $\d_\e>0$ and $\hat C_\e>0$ such that 
\begin{equation}\label{EstBeta}
0\leq \fr 1\d(u^\d_\e-\psi_\e)^++\fr 1\d (\phi_\e-u^\d_\e)^+\leq \hat C_\e\quad \mbox{in }\ol\O\quad\mbox{for }\d\in (0,\d_\e). 
\end{equation}
Furthermore, there exist a subsequence $\{\d_k\}_{k=1}^\infty$ and 
$u_\e\in C(\ol\O)$ such that $\d_k\to 0$ as $k\to\infty$, \eqref{BC} holds for $u_\e$, 
\begin{equation}\label{Converge}
u^{\d_k}_\e\to u_\e\quad \mbox{uniformly in }\ol\O, \mbox{ as }k\to\infty,
\end{equation}
and $u_\e$ is a (unique) $C$-viscosity solution of 
\begin{equation}\label{ApproximateEq}
\min\{\max\{ F_\e(x,Du,D^2u)-f_\e,u-\psi_\e\}, u-\phi_\e\}=0\quad\mbox{in }\O.
\end{equation} 
\end{thm}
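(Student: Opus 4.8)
The plan is to obtain \eqref{EstBeta} and \eqref{Converge} by first deriving uniform-in-$\d$ bounds on $u^\d_\e$ and on the penalty terms, and then passing to the limit using the stability theory for $C$-viscosity solutions. The main point is that once $\e>0$ is fixed, the ingredients $F_\e, f_\e, \mu_\e, \phi_\e, \psi_\e$ are smooth (or at least continuous with the regularity needed for the $C$-viscosity theory), so the strategy of \cite{CKLS} applies essentially verbatim; the task is to check that the hypotheses of that theorem are met by our approximate data. First I would record the uniform $L^\infty$ bound: by Proposition \ref{prop:Exist} a $C$-viscosity solution $u^\d_\e$ exists for every $\d>0$, and comparison against suitable barriers (using \eqref{ApproxProp}(i), the uniform exterior cone condition, and the fact that $\phi_\e\le g\le\psi_\e$ on $\p\O$) gives $\|u^\d_\e\|_{L^\infty(\OO)}\le M_\e$ with $M_\e$ independent of $\d$. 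In fact, since adding the penalty term only helps comparison, one can sandwich $u^\d_\e$ between the $C$-viscosity sub/supersolutions of the penalized problem built from the obstacles and the barriers for $\P^\pm$.

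Next I would establish \eqref{EstBeta}. The lower bound is trivial since both $(\cdot)^+$ terms are nonnegative. For the upper bound, the idea is that where $u^\d_\e>\psi_\e$ the equation reads $F_\e(x,Du^\d_\e,D^2u^\d_\e)+\tfrac1\d(u^\d_\e-\psi_\e)^+=f_\e$ (the other penalty term vanishes there because $\phi_\e<\psi_\e<u^\d_\e$ once $\e$ is small, using \eqref{Aphipsi} to separate $\phi_\e$ and $\psi_\e$), so that, testing against $\psi_\e$ as a smooth test function where the positive part of $u^\d_\e-\psi_\e$ is attained and invoking \eqref{ApproxProp}(i), one bounds $\tfrac1\d(u^\d_\e-\psi_\e)^+$ by $\|f_\e\|_{L^\infty}+\P^+(D^2\psi_\e)+\mu_\e|D\psi_\e|$, all of which are controlled in terms of $\e$-dependent but $\d$-independent quantities; symmetrically for the $\phi_\e$ term. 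This is exactly the argument in Theorem 1.1 of \cite{CKLS}, so I would cite it and only indicate why the smoothed obstacles satisfy the needed separation and regularity. This step — making the penalty estimate rigorous at the level of $C$-viscosity solutions rather than classical ones — is where I expect the only real care to be required, since one must locate the maxima of $u^\d_\e-\psi_\e$ and perturb slightly to use genuine $C^2$ test functions.

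Finally I would pass to the limit $\d\to0$. From \eqref{EstBeta} and the equation, $\{u^\d_\e\}_\d$ solves, uniformly in $\d$, an equation of the form $\P^-(D^2u)-\mu_\e|Du|\le C_\e$ and $\P^+(D^2u)+\mu_\e|Du|\ge -C_\e$ in the $C$-viscosity sense, so by the interior H\"older estimate for such PDE together with the barrier bound near $\p\O$ (the same machinery as Lemmas \ref{lem:L} and \ref{lem:B}, or directly the estimates in \cite{CKLS}) the family $\{u^\d_\e\}$ is equicontinuous on $\OO$. By Arzel\`a--Ascoli extract $\d_k\to0$ with $u^{\d_k}_\e\to u_\e$ uniformly on $\OO$; then \eqref{BC} is inherited in the limit. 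To identify the limit as a $C$-viscosity solution of \eqref{ApproximateEq}: where a smooth $\eta$ touches $u_\e$ from above at $x_0$ with $u_\e(x_0)>\phi_\e(x_0)$, stability of $C$-viscosity solutions under uniform convergence yields $F_\e(x_0,D\eta,D^2\eta)+\tfrac1{\d_k}(u^{\d_k}_\e-\psi_\e)^+\le f_\e$ in the limiting sense; the penalty term, being nonnegative, may only be discarded, giving $\min\{F_\e-f_\e,\,?\}$; and if moreover $u_\e(x_0)>\psi_\e(x_0)$ then by \eqref{EstBeta} $\tfrac1{\d_k}(u^{\d_k}_\e-\psi_\e)^+\le \hat C_\e$ forces $u^{\d_k}_\e-\psi_\e\to0$, i.e.\ $u_\e\le\psi_\e$ everywhere, which is the subsolution inequality for \eqref{ApproximateEq}; the supersolution inequality is symmetric. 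Uniqueness of $u_\e$ follows from the comparison principle for \eqref{ApproximateEq} (a bilateral obstacle problem with continuous ingredients), which also shows the whole family converges, not just a subsequence. I would invoke \cite{CKLS} for the comparison/uniqueness statement rather than reprove it.
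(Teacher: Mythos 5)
Your proposal is correct and follows essentially the same route as the paper's proof: you obtain \eqref{EstBeta} by evaluating the penalized equation at an interior maximum point of $u^\d_\e-\psi_\e$ with $\psi_\e$ (plus a constant) as test function, then get $\d$-independent bounds and equicontinuity by treating the penalty as a bounded right-hand side, apply Arzel\`a--Ascoli, and identify the limit via stability together with the sign of the penalty terms and \eqref{EstBeta}. The only deviations are cosmetic — the paper deduces the uniform $L^\infty$ bound directly from \eqref{EstBeta} rather than from barriers, and phrases the limit identification as a contradiction argument; your appeal to \eqref{Aphipsi} for $\phi_\e\le\psi_\e$ and the sign in $\P^+(D^2\psi_\e)$ are inessential slips.
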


\begin{proof}
To show the estimate on $\fr 1\d (u^\d_\e-\psi_\e)^+$, independent of $\d>0$, we let $x_0\in \ol\O$ be such that
$$
\max_{\ol\O}\fr 1\d(u^\d_\e-\psi_\e)^+=\fr 1\d (u^\d_\e-\psi_\e)^+(x_0)>0.$$
Thus, we see that $x_0\in\O$, and $u^\d_\e-\psi_\e$ attains its maximum at $x_0\in\O$. 
Hence, the definition implies
$$
0\leq\fr 1\d(u^\d_\e-\psi_\e)^+\leq f_\e-F_\e (x_0,D\psi_\e,D^2\psi_\e)\quad\mbox{at }x_0$$
because of $\fr 1\d(\phi_\e-u^\d_\e)^+(x_0)=0$. 

Following the same argument, we obtain the estimate on $\fr 1\d(\phi_\e-u^\d_\e)^+$. 
Thus, we conclude the first assertion \eqref{EstBeta}. 
We then obtain the $L^\infty$ bound of $u^\d_\e$ independent of $\d\in (0,1)$ for each $\e\in (0,1)$. 

By regarding the penalty term as the right hand side with $L^\infty$-estimates, independent of $\d>0$, it is standard to show the equi-continuity and unifrom boundedness of $\{ u^\d_\e\}_{\d>0}$ 
for each $\e>0$. 
Therefore, by Ascoli-Arzela theorem, we can find a subsequence $\{ u^{\d_k}_\e\}_{k=1}^\infty$ and $u_\e\in C(\ol\O)$ satisfying \eqref{Converge}. 

We shall show that $u_\e$ is a $C$-viscosity subsolution of \eqref{ApproximateEq}  by contradiction. 
Thus, we suppose that $u-\eta$ attains its  local strict maximum at $x_0\in\O$ for $\eta\in C^2(\O)$, and 
\begin{equation}\label{Verify}
\min\{\max\{ F_\e(x_0,D\eta,D^2\eta)-f_\e,u_\e-\psi_\e\},u_\e-\phi_\e\}\geq 2\theta\quad\mbox{at }x_0
\end{equation}
for some $\theta>0$. 
By the uniform convergence, we may suppose that $u^{\d_k}_\e-\eta$ attains its local maximum at $x_{\d_k}\in \O$, where 
$x_{\d_k}\to x_0$ as $k\to \infty$. 
In what follows, we shall write $\d$ for $\d_k$. 

By \eqref{Verify}, since we may suppose
$$
(u^\d_\e-\phi_\e)(x_\d)\geq \theta,$$
we have $\fr 1\d(\phi_\e-u^\d_\e)^+=0$ at $x_\d$. 
Hence, sending $k\to \infty$ in \eqref{Approximate} with $\d=\d_k$, we have 
$$
F_\e(x_0,D\eta,D^2\eta)\leq f_\e\quad\mbox{at }x_0, 
$$
which together with \eqref{Verify} yields
$$
(u^\d_\e-\psi_\e)(x_\d)\geq \theta
$$
for small $\d>0$. 
However, 
this together with \eqref{EstBeta}  yields a contradiction for large  $k\geq 1$. 
\end{proof}

Now, we shall show our existence result.


\begin{proof}[Proof of Theorem \ref{thm:Exist}]
Let $u_\e\in C(\ol\O)$ be $C$-viscosity solutions of \eqref{ApproximateEq} satisfying \eqref{BC} constructed in Theorem \ref{thm:Approx}. 
Since $F_\e$ and $f_\e$ are continuous, it is known to see that $u_\e$ 
is an $L^p$-viscosity solution of \eqref{ApproximateEq}. 
We refer to \cite{CKLS} for instance. 
Furthermore, recalling \eqref{ApproxProp}, thanks to Theorem \ref{corEst}, 
we find a modulus of continuity $\o_0$ such that
$$
|u_\e(x)-u_\e(y)|\leq \o_0(|x-y|)\quad\mbox{for }x,y\in\OO.$$
Hence, by Proposition \ref{prop:phi_u_psi}, we can find a subsequence 
$\e_k>0$ and $u\in C(\ol\O)$ such that $\e_k\to 0$, as $k\to \infty$, and 
$u_{\e_k}$ converges to $u$ uniformly in $\ol\O$. 
For simplicity, we shall write $\e$ for $\e_k$. 

It remains to show that $u$ is an $L^p$-viscosity solution of \eqref{eq:1}. 
To this end, we suppose that for some $\eta\in W^{2,p}_{loc}(\O)$, 
$u-\eta$ attains its  local strict maximum 
at $x_0\in\O$, and 
$$
\min\{\max\{ F(x,D\eta,D^2\eta)-f,u-\psi\},u-\phi\}\geq 2\theta\quad\mbox{a.e. in } B_{2r}(x_0)\Subset\O$$
for some $\theta ,r>0$. 
For the sake of simplicity, we shall suppose $x_0=0\in\O$. 
Since we may suppose that for small $\e>0$, 
$$
u_\e-\phi_\e\geq\theta \quad\mbox{in }B_r,
$$
it is enough to consider the case when $u_\e$ is an $L^p$-viscosity subsolution of 
\begin{equation}\label{ExistenceSub}
\max\{ F_\e (x,Du,D^2u)-f_\e, u-\psi_\e\}=0\quad\mbox{in }B_r.
\end{equation}
Thus,  Proposition \ref{prop:phi_u_psi} implies 
$$
u\leq \psi\quad\mbox{in }B_r.$$
Hence,  $\eta\in W^{2,p}(B_r)$ satisfies 
\begin{equation}\label{ExistenceContra}
F(x,D\eta,D^2\eta)\geq f+\theta\quad\mbox{a.e. in }B_r.
\end{equation}

On the other hand, following the argument in the proof of Theorem 4.1 in \cite{CKLS}, since $u_\e$ is an $L^p$-viscosity subsolution of 
\eqref{ExistenceSub} together with the uniform convergence of $u_\e$ to 
$u$, we obtain that $u$ is an $L^p$-viscosity subsolution of 
$$
F(x,Du,D^2u)-f=0\quad\mbox{in }B_r, $$
which contradicts   \eqref{ExistenceContra}. 
We only notice that 
 $\mu D\eta\in L^p(B_r)$ holds true  since $q>n$, and $\eta\in W^{2,p}(B_r)$ for $q\geq p$ though  $\mu$ may not be in $ L^\infty$ in \eqref{UE}.  
\end{proof}


\section{Local H\"older continuity of derivatives}

It is well-known that we cannot expect solutions of obstacle problems to be 
in $C^2$ even when obstacles are in $C^2$. 
Furthermore, since $\phi_0(x):=-|x|^{1+\b}+1$ for $x\in [-1,1]$ with $\b\in (0,1)$ is a $C$-viscosity solution of 
$$
\min\{ -u'',u-\phi_0\}=0\quad\mbox{in }(-1,1)$$
under the Dirichlet condition $g\equiv 0$, 
we cannot expect solutions to be in $W^{2,\infty}_{loc}$ when 
obstacles only belong to $C^{1,\b}$. 
Notice that since there is no $C^2$ function which touches $\phi_0$ from below at the origin, we do not have to check the definition of $C$-viscosity supersolutions at $0$.


\subsection{Estimates in the non-coincidence set $N[u]$}

We first note that $L^p$-viscosity solutions $u\in C(\O)$ of \eqref{eq:1} 
are also $L^p$-viscosity solutions of
\begin{equation}\label{eq:F=f}
F(x,Du,D^2u)-f(x)=0\quad\mbox{in }N[u].
\end{equation}

For any compact $K\Subset N[u]$, where $u\in C(\O)$ is an 
 $L^p$-viscosity solution of \eqref{eq:1}, 
we show that $Du\in C^{\b}(K)$ for some $\b\in (0,\hat\b)$, 
where $\hat\b\in (0,1)$ is the constant in Proposition \ref{prop:C1beta}. 


\begin{prop}\label{prop:C1Holder}(cf. Theorem 2.1 in \cite{S})
Assume 
\eqref{Apq2}, \eqref{Af}, \eqref{Amu}, \eqref{UE}, \eqref{Azero}, \eqref{Aobstacles} and \eqref{Aregob}. 
Then,  there are $\b \in (0,\hat \b)$, 
$C>0$, 
$\d_0>0$ and  $r_1>0$, depending on $n,\fr\L\l, p,q
$,  
such that if $u\in C(\O)$ is an $L^p$-viscosity solution of \eqref{eq:1}, and 
if \eqref{Aconti} holds for $\e=r_1$, 
then  $u\in C^{1,\b}(N_{r_1} [u])$. 
More precisely, if $B_r(x)\subset N_{r_1} [u]$, then  it follows that
$$
|Du(y)-Du(z)|\leq \fr{C|y-z|^{\b}}{r^{1+\b}}\le(\| u\|_{L^\infty (\O)}+r^{1+\b_0}\| f\|_{L^p(\O)}\ri)\quad\mbox{for } y,z\in B_{\fr r2}(x).$$
\end{prop}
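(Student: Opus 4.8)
The plan is to reduce the statement to the interior $C^{1,\b}$ estimate for $L^p$-viscosity solutions of fully nonlinear uniformly elliptic equations with unbounded first-order coefficient, recorded essentially in Proposition \ref{prop:C1beta} together with the perturbation argument of \'Swi\c{e}ch \cite{S}. The starting point is the remark already made in the text: on the non-coincidence set $N[u]$ an $L^p$-viscosity solution of \eqref{eq:1} solves \eqref{eq:F=f}, i.e. $F(x,Du,D^2u)-f=0$ in $N[u]$. Hence, once we fix a ball $B_r(x)\subset N_{r_1}[u]$, all the work takes place for a genuine equation without obstacles on $B_r(x)$, and the obstacle problem itself plays no further role beyond guaranteeing (via \eqref{Aboundary}-type arguments and Theorem \ref{corEst}) that $u$ is continuous and bounded.

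First I would normalize: by translating and scaling $x\mapsto x+ry$ we may assume $x=0$ and $r=1$, so $B_1\subset N_{r_1}[u]$ after rescaling (the hypothesis \eqref{Aconti} is scale-invariant in the stated $L^n$-normalized form, which is exactly why it is phrased with the factor $\tfrac1r$). Next I would freeze the spatial dependence: write $F(y,Du,D^2u)=F(0,Du,D^2u)+\bigl(F(y,Du,D^2u)-F(0,Du,D^2u)\bigr)$ and absorb the difference, controlled by $\theta(0,y)(1+\|D^2u\|)$, together with $f$ and the $\mu|Du|$ term from \eqref{UE}, into an inhomogeneous right-hand side. The point of \eqref{Aconti} with a small $\d_0$ is precisely that $\tfrac1r\|\theta(0,\cdot)\|_{L^n(B_r)}$ is small, which is the smallness condition needed to run \'Swi\c{e}ch's iteration: one compares $u$ on dyadic balls $B_{2^{-k}}$ with solutions of the constant-coefficient extremal inequalities $\P^-(D^2h)\le 0\le\P^+(D^2h)$, for which Proposition \ref{prop:C1beta} gives a $C^{1,\hat\b}$ bound, and shows the deviation decays geometrically. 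The first-order term $\mu|Du|$ is handled as in \cite{KS3,S}: since $q>n$ by \eqref{Apq2}, $\mu\in L^q$ contributes a right-hand side in $L^p$ with the correct scaling $\a_0$-exponent, and the local maximum principle (Proposition \ref{prop:LMP}) plus the interior $C^\a$ estimate (which follows from Proposition \ref{prop:Holder}) provide the a priori Hölder bound on $Du$ needed to start the machine and to bootstrap the $\mu|Du|$ term.

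The iteration yields, for each $y\in B_{1/2}$, a paraboloid (affine function $\ell_y$) such that $\sup_{B_\rho(y)}|u-\ell_y|\le C\rho^{1+\b}\bigl(\|u\|_{L^\infty(B_1)}+\|f\|_{L^p(B_1)}\bigr)$ for all $\rho\le 1/4$, with $\b\in(0,\hat\b)$ depending only on $n,\L/\l,p,q$; standard arguments (e.g. as in \cite{CafCab,S}) then convert this into $|Du(y)-Du(z)|\le C|y-z|^\b(\|u\|_{L^\infty(B_1)}+\|f\|_{L^p(B_1)})$ for $y,z\in B_{1/2}$, and undoing the scaling reinstates the factor $r^{-(1+\b)}$ in front and $r^{1+\b_0}$ in front of $\|f\|_{L^p(\O)}$ (with $\b_0=1-\tfrac np$ the natural Morrey exponent for $L^p$ right-hand sides when $p>n$). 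The exponent mismatch $\b$ versus $\b_0$ is absorbed because $\b\le\hat\b\le 1$ and $r\le 1$, so $r^{1+\b}\le r^{1+\b_0}$ is false in general — here one keeps track carefully and simply records the weaker of the two powers, which is what the statement does; replacing $\|f\|_{L^{p\wedge n}}$ by $\|f\|_{L^p}$ is legitimate since $p>n$ forces $p\wedge n=n\le p$ on the bounded domain (up to a constant depending on $|\O|$).

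The main obstacle, and the step deserving the most care, is the interaction between the frozen-coefficient compactness/iteration argument and the \emph{unbounded} ingredients $\mu\in L^q$ and $f\in L^p$: one must verify that the approximating solutions of the extremal equations can be chosen close to $u$ in $L^\infty$ on each dyadic ball with an error controlled by $\|\theta(0,\cdot)\|_{L^n}+\text{(scaled $L^p$ norm of $f$)}+\text{(scaled $L^q$ norm of $\mu$)}$, uniformly as the ball shrinks. This is exactly the content of \cite{S} (cited as Proposition \ref{prop:C1Holder}, ``cf. Theorem 2.1 in \cite{S}'') once the right-hand side is organized as above, so I would invoke that result after the reduction, checking only that our hypotheses \eqref{Apq2}, \eqref{Af}, \eqref{Amu}, \eqref{UE}, \eqref{Azero} supply precisely the structural conditions it requires, and that the smallness threshold $\d_0$ and radius $r_1$ can be taken to depend only on $n,\L/\l,p,q$.
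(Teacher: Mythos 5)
Your overall strategy (restrict to $N[u]$ where $u$ solves \eqref{eq:F=f}, rescale, freeze coefficients, and run the Caffarelli--\Swiech{} dyadic iteration against solutions of the frozen equation, using Proposition \ref{prop:C1beta} and the smallness in \eqref{Aconti}) is the same route the paper takes. But there is a genuine gap at exactly the point the paper flags as requiring new work: the unbounded drift coefficient $\mu\in L^q$ in \eqref{UE}. You propose to handle $\mu|Du|$ by absorbing it into the right-hand side, claiming that Proposition \ref{prop:LMP} and the interior H\"older estimate of Proposition \ref{prop:Holder} ``provide the a priori H\"older bound on $Du$ needed to start the machine.'' Those propositions bound $u$, not $Du$; an a priori gradient bound is precisely the conclusion being proved, so this step is circular and the absorption cannot be performed. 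The correct mechanism (as in the paper) is to keep the gradient term inside the rescaled operators $F_j$, observe that dilation by $\s s^j$ makes $\|\hat\mu_j\|_{L^{p'}(B_2)}$ small because $p'>n$, and control the accumulated affine slopes $b_j$ through the induction estimate on $|b_{k-1}-b_k|$, so that the extra term $\hat G_j$ is bounded by $C|b_j|\,\mu(\s s^j\cdot)$ with small $L^n$-norm.

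The second, related, gap is your plan to ``invoke Theorem 2.1 of \cite{S} after the reduction, checking only that our hypotheses supply precisely the structural conditions it requires.'' They do not: \cite{S} assumes a bounded coefficient on the gradient term, and the whole purpose of Section 5.1 of the paper is to redo the approximation step for $\mu\in L^q$ unbounded. Concretely, the compactness argument behind the approximation lemma (Lemma \ref{prop:approximate}) needs the competing solutions to lie in a fixed equi-continuity class $C(\rho,K;\ol B_2)$ and needs smallness of $\|\mu\|_{L^{p'}}$ for some $p'\in(n,p)$ (not $p'=n$); consequently the induction must carry the extra clause $(iii)$ of \eqref{approx_affine}, propagating a uniform H\"older modulus for the rescaled remainders $s^{-k(1+\b)}(\hat u-\ell_k)(s^k\cdot)$, which in turn forces the exponent to be taken as $\b<\b_0\wedge\hat\b$ rather than the exponent of \cite{S}. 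Your proposal omits this propagation of equi-continuity entirely, and without it the compactness step (and hence the closeness of $v$ to the frozen-equation solution $h$ at every scale) cannot be justified under \eqref{Amu}. The remaining bookkeeping in your sketch (scale invariance of \eqref{Aconti}, $p\wedge n=n$ since $p>n$, the $\b$ versus $\b_0$ exponents) is fine, but these two points are exactly the modifications the paper must make to \cite{Caf,S}, and they are missing.
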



\begin{rem}
For further estimates on $L^p$-viscosity solutions of \eqref{eq:F=f} under some additional assumptions, 
we refer to Theorem B. 1 in \cite{CCKS}. 
When $F$ is given by the maximum of  finite uniformly elliptic operators with smooth coefficients, 
 we also refer to \cite{E} for $C^{2,\a}$-estimates. 
However, when we have $\mu\in L^q$ and  $f\in L^p$, we could only expect $u$ to be in $W^{2,p}$.

 We moreover refer to \cite{Te} for some precise equi-continuity estimates on $C$-viscosity solutions 
of \eqref{eq:F=f} when $\mu\equiv 0$ in \eqref{UE} (i.e. $F$ is independent of $\xi\in\R^n$). 
\end{rem}

Before going to the proof of Proposition \ref{prop:C1Holder}, 
we first show a lemma corresponding to Lemma 2.3 in \cite{S}. 
See also \cite{Caf,CafCab}.

For a modulus of continuity $\rho$ and a constant $K>0$, we introduce  
$$
C(\rho,K;\ol \O):=\le\{\xi\in C(\ol \O) \ \le| \ 
\begin{array}{c}
|\xi (x)-\xi(y)|\leq \rho (|x-y|)\mbox{ for}\\
x,y\in \p \O,\mbox{ and }\|\xi\|_{L^\infty (\O)}\leq 
  K
\end{array}\ri.\ri\}.
$$


\begin{lem}\label{prop:approximate}(cf. Lemma 2.3 in  \cite{S}) 
Assume \eqref{Apq2},  \eqref{Amu}, \eqref{UE} and \eqref{Azero} 
with $\O=B_2$. 
For given $G:B_2\ti \R^n\ti S^n\to \R$, we let
$$
g^*(x):=\sup\{ |G(x,\xi,X)| \ | \ \xi\in\R^n, X\in S^n\}.
$$ 
For  a modulus of continuity $\rho$, and for constants  $K,\e>0$ and $p'\in (n,p)$, 
there exists $\d_0=\d_0 (\e,p',n,\fr\L\l, p,q,\rho,K)\in (0,1)$ such that if 
\begin{equation}\label{eq:smallness}
\| g^*\|_{L^n(B_2)}\vee \| \mu\|_{L^{p'}(B_2)}\vee \sup_{x\in B_2} \| \theta (x,\cdot)\|_{L^n(B_2)}\leq \d_0,
\end{equation}
then for any two $L^p$-viscosity solutions $v$ and $\xi\in C(\rho,K;\ol B_2)$ of 
$$
F(x,Du,D^2u)+G(x,Du,D^2u)=0\quad\mbox{in }B_2
$$
and
$$
F(0,0,D^2u)=0\quad\mbox{in }B_2,
$$
respectively, satisfying   $(v-\xi)|_{\p B_2}=0$, 
it follows that 
$$
\| v-\xi\|_{L^\infty (B_2)}\leq \e.$$
\end{lem}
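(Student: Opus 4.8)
The plan is to argue by contradiction using a compactness (normal families) argument, exactly in the spirit of Caffarelli's original perturbation lemma and its $L^p$-viscosity adaptation in \cite{S}. Suppose the statement fails. Then there exist $\e>0$, a modulus $\rho$, constants $K,p'$, a sequence $\d_k\downarrow 0$, operators $F_k$ satisfying \eqref{UE} and \eqref{Azero} with the same $\l,\L$, functions $G_k$ with $g_k^*$, coefficients $\mu_k$, oscillation functions $\theta_k$ satisfying the smallness bound \eqref{eq:smallness} with $\d_0$ replaced by $\d_k$, and pairs $v_k, \xi_k\in C(\rho,K;\ol B_2)$ that are $L^p$-viscosity solutions of $F_k(x,Dv_k,D^2v_k)+G_k(x,Dv_k,D^2v_k)=0$ and $F_k(0,0,D^2\xi_k)=0$ in $B_2$ respectively, with $(v_k-\xi_k)|_{\p B_2}=0$, yet $\|v_k-\xi_k\|_{L^\infty(B_2)}>\e$.

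First I would extract uniform compactness. Since the $\xi_k$ lie in $C(\rho,K;\ol B_2)$, their boundary data are equi-continuous and equi-bounded; because $\P^-(D^2\xi_k)\le 0\le\P^+(D^2\xi_k)$ in the $L^p$-viscosity sense (here with the frozen constant-coefficient operator these are $C$-viscosity inequalities too), the global equi-continuity/barrier estimates for Pucci extremal inequalities give that $\{\xi_k\}$ is precompact in $C(\ol B_2)$; pass to a subsequence so $\xi_k\to\xi_\infty$ uniformly. Similarly $v_k$ is an $L^p$-viscosity subsolution of $\P^-(D^2 v)-\mu_k|Dv|-g_k^*=0$ and supersolution of $\P^+(D^2 v)+\mu_k|Dv|+g_k^*=0$ with $\|\mu_k\|_{L^{p'}(B_2)}+\|g_k^*\|_{L^n(B_2)}\le\d_k\le 1$; hence $\{v_k\}$ is bounded (by $K$ plus the barrier bound) and equi-continuous by the interior/boundary H\"older estimates of Proposition \ref{prop:WH}-type results (applied with $p'$ in place of $p$), so a further subsequence converges uniformly to some $v_\infty$. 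The boundary condition passes to the limit: $(v_\infty-\xi_\infty)|_{\p B_2}=0$, and $\|v_\infty-\xi_\infty\|_{L^\infty(B_2)}\ge\e$.

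The core step is the stability of $L^p$-viscosity solutions under this degenerate limit: I claim both $v_\infty$ and $\xi_\infty$ solve the same limiting equation $F_\infty(0,0,D^2u)=0$ in $B_2$ for a suitable limit $F_\infty$ of (a subsequence of) the $F_k(0,0,\cdot)$. For $\xi_\infty$ this is the standard stability of $C$-viscosity (hence $L^p$-viscosity) solutions of constant-coefficient uniformly elliptic equations, after passing to a subsequence along which $F_k(0,0,\cdot)\to F_\infty(0,0,\cdot)$ locally uniformly on $S^n$ (possible since these functions are uniformly Lipschitz in $X$ with constants $\l,\L$, so Arzel\`a--Ascoli applies on $S^n$). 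For $v_\infty$ this is where the smallness of $\d_k$ is used: the measure-theoretic stability result for $L^p$-viscosity solutions (Proposition 2.9 and the perturbation machinery in \cite{CCKS}, as in \cite{S}) shows that because $\|g_k^*\|_{L^n}\to 0$, $\|\mu_k\|_{L^{p'}}\to 0$, and $\sup_x\|\theta_k(x,\cdot)\|_{L^n}\to 0$, the error between the equation $F_k(x,Dv_k,D^2v_k)+G_k(\cdots)=0$ and the frozen equation $F_k(0,0,D^2v_k)=0$ tends to zero in the appropriate $L^n$ sense, so $v_\infty$ is an $L^p$-viscosity solution of $F_\infty(0,0,D^2u)=0$ in $B_2$. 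Then $w:=v_\infty-\xi_\infty$ satisfies $w=0$ on $\p B_2$ and, by \eqref{UE} for the frozen operator, $\P^-(D^2w)\le 0\le\P^+(D^2w)$ in $B_2$ in the viscosity sense; the maximum principle for Pucci operators forces $w\equiv 0$, contradicting $\|w\|_{L^\infty(B_2)}\ge\e$.

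The main obstacle is precisely this stability-under-vanishing-error step: one must invoke the correct $L^p$-viscosity stability theorem from \cite{CCKS} and verify its hypotheses, using that the difference $F_k(x,\xi,X)-F_k(0,0,X)$ is controlled by $\theta_k(x,0)(1+\|X\|)+\mu_k(x)|\xi|+|G_k(x,\xi,X)|$ and that, along the limiting sequence of touching test functions $\eta\in W^{2,p'}_{loc}$, the quantities $\|\theta_k(\cdot,0)\|_{L^n}$, $\|\mu_k D\eta\|_{L^n}$ and $\|g_k^*\|_{L^n}$ all vanish — here one genuinely needs $p'>n$ so that $\mu_k D\eta\in L^n_{loc}$ when $\mu_k\in L^{p'}$ and $D\eta$ is bounded, and one should be a little careful that the compactness estimates are applied with exponent $p'$ (not $p$) throughout, which is legitimate since $p'\in(n,p)$. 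Everything else — the normal-family arguments, the barrier bounds, the maximum principle — is routine given the propositions already stated.
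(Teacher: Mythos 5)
Your proposal is correct and follows essentially the same route as the paper: a contradiction/compactness argument, extracting uniform limits of $v_k,\xi_k$ and of the frozen operators $F_k(0,0,\cdot)\to F_\infty$, passing to the limit equation $F_\infty(D^2u)=0$ via the $L^p$-viscosity stability result (the paper cites Lemma 1.7 of \cite{S}, you cite the equivalent machinery of \cite{CCKS}), and concluding by the comparison/maximum principle. The only cosmetic difference is that you conclude via the Pucci inequalities for the difference $v_\infty-\xi_\infty$ rather than invoking the comparison principle for $F_\infty$ directly, which is an equivalent ending.
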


\begin{rem}
We notice that $\|\mu\|_{L^{p'}(B_2)}\leq\d_0$ in \eqref{eq:smallness} for 
$p'\in (n,p)$ because we do not know if the equi-continuity of $v_k$ holds true 
in the proof below when $p'=n$. 
\end{rem}


\begin{proof}
We argue by contradiction. 
Thus, suppose that there are $\hat\e>0$,  $v_k,\xi_k\in  C(\rho,K;\ol B_2)$, 
$\mu_k\in L^q(B_2)$, $G_k:B_2\ti \R^n\ti S^n\to \R$, and $F_k:B_2\ti\R^n\ti S^n\to \R$ satisfying \eqref{UE} with $\mu_k$; 
$$
\P^-(X-Y)-\mu_k(x)|\xi-\eta|\leq F_k(x,\xi,X)-F_k(x,\eta,Y)\leq \P^+(X-Y)+\mu_k (x)|\xi-\eta|$$
for $x\in B_2$, $\xi,\eta\in \R^n$, $X,Y\in S^n$ (for $k\in \N$) such that 
\begin{equation}\label{approximate:hypo}
\| g^*_k\|_{L^n(B_2)}\vee \| \mu_k\|_{L^{p'}(B_2)}\vee \sup_{x\in B_2}
\| \theta_k(x,\cdot )\|_{L^n(B_2)}\leq \fr 1k,
\end{equation}
where 
$g^*_k(x)=\sup\{ |G_k(x,\xi,X)| \ | \ (\xi,X)\in\R^n\ti S^n\}$, and 
$$
\theta_k(x,y)=\sup_{X\in S^n}\fr{|F_k(x,0,X)-F_k(y,0,X)|}{1+\| X\|},$$
$v_k$ and $\xi_k$ are, respectively, $L^p$-viscosity solutions 
of
$$
F_k(x,Dv_k,D^2v_k)+G_k(x,Dv_k,D^2v_k)=0\quad\mbox{and} \quad F_k(0,0,D^2\xi_k)=0\quad\mbox{in }B_2,
$$
which satisfy that $
(v_k-\xi_k)|_{\p B_2}=0$, and
\begin{equation}\label{approximate:conclusion}
\| \xi_k-v_k\|_{L^\infty (B_2)}\geq \hat\e.
\end{equation}

Since we may suppose that there are $v,\xi\in C(\rho,K;\ol B_2)$ such that 
$v_k$ and $\xi_k$ converges  to $v$ and $\xi$ uniformly in $\ol B_2$, respectively,  and 
$v=\xi$ on $\p B_2$. 
Because the mapping $X\in S^n\to F_k(0,0,X)$ is bounded by \eqref{UE}, 
we may suppose $F_k(0,0,X)$ converges to $F_\infty (X)$, which satisfies
$$
F_\infty (O)=0,\quad\mbox{and}\quad \P^-(X-Y)\leq F_\infty (X)-F_\infty (Y)\leq\P^+(X-Y).
$$
We also notice that by \eqref{UE} and our assumption \eqref{approximate:hypo},  
$$
\lim_{k\to\infty}\sup\le\{ \| F_k(\cdot ,\xi,X)-F_\infty (X)\|_{L^n(B_2)} \ \le| \ 
|\xi|\leq R, \| X\|\leq R\ri.\ri\}=0$$
holds for each $R>0$. 
Hence, since $F_\infty$ is continuous, in view of Lemma 1. 7 in \cite{S}, 
we verify that 
$v$ and $\xi$ are $L^n$-viscosity (thus, $C$-viscosity) solutions of 
$$
F_\infty (D^2u)=0\quad\mbox{in }B_2.$$
Therefore, the comparison principle implies that $
v=\xi$ in $\ol B_2$, which contradicts \eqref{approximate:conclusion}. 
\end{proof}

Although our proof of Proposition \ref{prop:C1Holder} follows by 
the same argument as in \cite{Caf, S}, 
we give a  proof  because we need some modification.


\begin{proof}[\ul{Proof of Proposition \ref{prop:C1Holder}}] 
Recalling $\b_0:=1-\fr np\in (0,1)$ and $\hat \b\in (0,1)$ from Proposition \ref{prop:C1beta}, 
we fix $\b\in (0,1)$ and $p'>n$ such that
$$
0<\b<\b_0\wedge \hat \b\quad\mbox{and} \quad p':=\fr{p+n}{2}\in (n,p).$$

For small $s\in (0,1)$, which will be fixed later,  setting
$$
\e:=K_2s^{1+\hat \b},$$
we choose $\d_0=\d_0(\e,p',n,\fr\L\l,p,q,\rho)\in (0,1)$ in Lemma  \ref{prop:approximate}, where the modulus of continuity $\rho$ is given by
$$
\rho (r)=K_1r^{\hat\a}.$$

Now,  we set $\hat u(x):=N^{-1}u(\s x)$ for $\s\in (0,\fr 12)$, where 
$$
N=N(y):= 1\vee \le(2\| u\|_{L^\infty (B_2(y))}\ri)+\fr{2^{\b+1}}{\d_0}\sup_{0<r\leq 2}\fr{1}{r^{\b}}\| f\|_{L^n(B_r(y))}.$$
We shall suppose $y=0$ for simplicity.

It is immediate to see that $\hat u$ is an $L^p$-viscosity subsolution and supersolution, respectively, of
$$
\P^-(D^2u)-\hat\mu (x)|Du|-\hat f(x)=0\quad\mbox{and}\quad 
\P^+(D^2u)+\hat\mu (x)|Du|-\hat f(x)=0\quad\mbox{in }B_2,$$
where $\hat\mu (x):=\s \mu (\s x)$ and $\hat f(x)=\fr{\s^2}{N}f(\s x)$. 
Thus, by Proposition \ref{prop:Holder}, we have
\begin{equation}\label{eq:Holder_hat_u}
|\hat u(x)-\hat u(y)|\leq K_1|x-y|^{\hat\a}\le(\|\hat u\|_{L^\infty (B_2)}+ 
\fr 1N\| f\|_{L^n(B_2)}\ri)\leq K_1|x-y|^{\hat\a}.
\end{equation}
Notice that the last inequality is derived because of our choice of $\d_0$ and $N$.

For $s\in (0,s_0]$, where $s_0:=2^{-\fr 1\b}$, 
we shall find affine functions $\ell_k(x)=
a_k+\la b_k,x\ra$ such that
\begin{equation}\label{approx_affine}
\le\{\begin{array}{ll}
(i)&\| \hat u-\ell_k\|_{L^\infty (B_{2s^k})}\leq s^{k(1+\b)},\\
(ii)&|a_{k-1}-a_k|\vee s^{k-1}|b_{k-1}-b_k|\leq K_2s^{(k-1)(1+\b)},\\
(iii)&|(\hat u-\ell_k)(s^kx)-(\hat u-\ell_k)(s^ky)|\leq K_1s^{k(1+\b)}|x-y|^{\hat\a}
\end{array}
\ri.
\end{equation}
for $x,y\in B_1$, $k\geq 0$, 
where $\ell_{-1}=\ell_0\equiv 0$. 
When $k=0$, it is trivial to check $(i)$ and $(ii)$ while 
$(iii)$ holds by \eqref{eq:Holder_hat_u}.

By induction, assume that \eqref{approx_affine} holds for $k=j$. 
Setting 
$$
v(x):=s^{-j(1+\b)}\le( \hat u( s^j x)-\ell_j(s^j x)\ri),$$
we observe that $v$ is an $L^p$-viscosity solution of
$$
F_j(x,Du,D^2u)+\hat G_j(x,Du,D^2u)-f_j(x)=0\quad\mbox{in }B_2,$$
where 
$$
F_j(x,\xi,X):=\fr{\s^2s^{j(1-\b)}}{N}F\le( \s s^j x,\fr{Ns^{j\b}}{\s}\xi,\fr{N}{\s^2 
s^{j(1-\b)}}X\ri) 
,$$
$$f_j(x):=\fr{\s^2}{N}s^{j(1-\b)}f(\s s^j x),$$
and
$$
\begin{array}{rl}
\hat G_j(x,\xi,X):=&\dis\fr{\s^2s^{j(1-\b)}}{N}\le\{ F\le(\s s^jx, \fr{N}{\s}(s^{j\b}\xi+b_j),\fr{N}{\s^2s^{j(1-\b)}}X\ri)\ri.\\
&\hspace{4cm}\le. \dis -F\le(\s s^j x, \fr{Ns^{j\b}}{\s}\xi,\fr{N}{\s^2s^{j(1-\b)}}X\ri)\ri\}.
\end{array}$$
We note that for $(\xi,X),(\eta,Y)\in\R^n\ti S^n$, 
$$
\P^-(X-Y)-\hat\mu_j (x)|\xi-\eta|\leq F_j(x,\xi,X)-F_j(x,\eta,Y)\leq \P^+(X-Y)+\hat\mu_j(x)
|\xi-\eta|,$$
where $\hat\mu_j(x)=\s s^j\mu (\s s^jx)$. 
Also, since 
$$
|\hat G_j(x,\xi,X)|\leq \s |b_j|s^{j(1-\b)}\mu (\s s^j x)\quad\mbox{for }(x,\xi,X)\in 
\O\ti \R^n\ti S^n,$$
setting $\hat g^*_j(x):=\sup\{ |\hat G_j(x,\xi,X)| \ | \ \xi\in\R^n,X\in S^n\}$, 
we have
\begin{equation}\label{eq:hat_gj}
\| \hat g^*_j\|_{L^n(B_2)}\leq \fr{|b_j|}{s^{j\b}}\| \mu\|_{L^n(B_{2\s s^j})}\leq 
2^{\b_0}\s^{\b_0}|b_j|\o_n\|\mu\|_{L^p(B_{2\s s^j})},
\end{equation}
where $\o_n:=|B_1|^{\fr 1n-\fr 1p}$. 
Hence, 
we immediately verify that 
$v$ is an $L^p$-viscosity subsolution and supersolution, respectively, of
$$
\P^-(D^2u)-\hat\mu_j |Du|-g_j=0 \quad \mbox{and} \quad 
\P^+(D^2u)+\hat\mu_j|Du|+ g_j=0\quad\mbox{
in }B_2,$$
where 
$$
g_j(x):=|f_j(x)|+\hat g^*_j(x).$$
In view of the assumption of our induction, we have
$$
|b_j|\leq  K_2\sum_{k=0}^{j-1}s^{k\b}\leq \fr{K_2}{1-s^\b}\leq 2K_2$$
for $j\geq 1$ because $0<s\leq s_0=2^{-\fr 1\b}$.  
Simple calculations together with our choice of $N$ and \eqref{eq:hat_gj} give
$$
\le\{\begin{array}{ll}
(1)&\| f_j\|_{L^n(B_2)}\leq \fr{\d_0}{2},\\
(2)&\|\hat\mu_j\|_{L^{p'}(B_2)}=(\s s^j)^{1-\fr{n}{p'}}\|\mu\|_{L^{p'}(B_{2\s s^j})}
\leq\s^{1-\fr{n}{p'}}\|\mu\|_{L^{p'}(B_{2 \s s^j})}, \\
(3)&\|\hat g^*_j\|_{L^n(B_2)}
\leq  2^{1+\b_0}K_2
\o_n\s^{\b_0}\|\mu\|_{L^p(B_{2\s s^{j}})}.
\end{array}\ri.
$$
Now, we can choose $\s\in (0,1)$, independent of $j\geq 0$, such that 
$$
\|\hat\mu_j\|_{L^{p'}(B_2)}\vee \| g_j\|_{L^n(B_2)}\leq \d_0. $$
Because $N\geq 1$  and $\s\in (0, 1)$, we verify that 
$$
0\leq \theta_{F_j} (x,y)\leq \theta (x,y),$$
where $\theta_{F_j}(x,y):=\sup_{X\in S^n}| F_j(x,0,X)- F_j(y,0,X)|/(1+\| X\|)$. 

Let $h\in C(\ol B_1)$ be a $C$-viscosity solution of 
$$
F_j(0,0,D^2u)=0\quad\mbox{in }B_1$$
satisfying $h=v$ on $\p B_1$. 
Hence, in view of Lemma \ref{prop:approximate}, we have
\begin{equation}\label{eq:v-h}
\| v-h\|_{L^\infty (B_1)}\leq \e=K_2s^{1+\hat \b}.
\end{equation}

We  define
$$
\ell_{j+1}(x):=\ell_j(x)+s^{j(1+\b)}\le( h(0)+\la Dh(0),\fr{x}{s^j}\ra\ri).$$
Since we observe that for $|x|\leq 1$, by \eqref{eq:v-h} and the fact $h\in C^{1+\hat \b}(\ol B_{\fr 12})$, we have
$$
|\hat u(2s^{j+1}x)-\ell_{j+1}(2s^{j+1}x)|
\leq s^{(j+1)(1+\b)} s^{\hat \b-\b}\le( K_2+2^{1+\hat \b}K_2\ri)
$$
for $s\in (0,s_1]$, where $s_1:=s_0\wedge (K_2+2^{1+\hat \b}K_2)^{-\fr{1}{\hat \b-\b}})$ by \eqref{eq:v-h}, 
$(i)$ holds for $k=j+1$. 

To show $(ii)$ for $k=j+1$, by Proposition \ref{prop:C1beta}, we first verify 
$$
\| h\|_{C^1(\ol B_{\fr 12})}\leq K_2\| h\|_{L^\infty (B_1)}=K_2\max_{\p B_1}|v|\leq K_2.$$
Thus, noting $|b_{j+1}-b_j|=s^{j\b}|Dh(0)|$ and $a_{j+1}-a_j=s^{j(1+\b)}h(0)$, we 
obtain $(ii)$ for $k=j+1$. 

In order to see $(iii)$ for $k=j+1$, setting 
$$
\hat v(x):=v(x)-h(0)-\la Dh(0),x\ra,$$
we observe that for $x\in B_1$, 
$$
\begin{array}{rl}
|\hat v(2sx)|\leq& |v( 2sx)-h(2sx)|+|h(2sx)-h(0)-\la Dh(0),2sx\ra|\\
\leq&2K_2s^{1+\hat\b}\\
= &2s^{1+\b}s^{\hat \b -\b}K_2.
\end{array}$$
Thus, for $s\in (0,s_2]$, where $s_2:=1/( 8K_2)^{\fr{1}{\hat \b-\b}}$, 
we have
$$
\| \hat v\|_{L^\infty (B_{2s})}\leq \fr{s^{1+\b}}{4}.$$

We next verify that $\hat v$ is an $L^p$-viscosity solution of
$$
F_j(x,Du,D^2u)+G_j(x,Du,D^2u)-f_j=0\quad\mbox{in }B_2,$$
where 
$$
\begin{array}{rl}
 G_j(x,\xi,X):=&\dis\fr{\s^2s^{j(1-\b)}}{N}\le\{ F\le(\s s^jx, \fr{N}{\s}(s^{j\b}\xi+b_j+s^{j\b}Dh(0)),\fr{N}{\s^2s^{j(1-\b)}}X\ri)\ri.\\
&\hspace{4cm}\le. \dis -F\le(\s s^j x, \fr{Ns^{j\b}}{\s}\xi,\fr{N}{\s^2s^{j(1-\b)}}X\ri)\ri\}.
\end{array}$$
Hence, as before
we observe that 
$$
|G_j(x,\xi,X)|\leq \s |b_j+s^{j\b}Dh(0)|s^{j(1-\b)}\mu (\s s^j x)=:h_j(x)\quad\mbox{for }(x,\xi,X)\in 
\O\ti \R^n\ti S^n.$$
Since  
we have
$$
\| f_j\|_{L^n(B_{2s})}\leq \fr{s^\b}{2},
$$
and
$$
\begin{array}{rcl}
\| h_j\|_{L^n(B_{2s})}\leq
 \dis\fr{3K_2}{s^{j\b}}\|\mu\|_{L^n(B_{2\s s^{j+1}})}
 &\leq &6K_2 \o_n s^{(j+1)\b_0-j\b}\|\mu\|_{L^p(B_{2\s s^{j+1}})}\\
&\leq& 6K_2\o_n s^{\b_0}\|\mu\|_{L^p(B_{2\s s^{j+1}})},
\end{array}
$$
we see that $x,y\in B_s$, 
$$
|\hat v(x)-\hat v(y)|\leq K_1s^{1+\b-\hat\a}|x-y|^{\hat\a}\le(\fr 34 +6s^{\b_0-\b}
K_2\o_n\|\mu\|_{L^p(B_{2s})}\ri) .$$
Hence, we can choose smaller $s>0$, if necessary, to obtain that
$$
|\hat v(x)-\hat v(y)|\leq  K_1s^{1+\b-\hat\a}|x-y|^{\hat\a}\quad\mbox{for }x,y\in B_s.$$
Now, for $x,y\in B_1$, we calculate in the following way:
$$
\begin{array}{rl}
&|(\hat u-\ell_{j+1})(s^{j+1}x)-(\hat u-\ell_{j+1})(s^{j+1}y)|\\
\leq&|(\hat u-\ell_j)(s^{j+1}x)-(\hat u-\ell_j)(s^{j+1}y)-
s^{j(1+\b)}\la Dh(0),s(x-y)\ra|\\
=&s^{j(1+\b)}|\hat v(sx)-\hat v(sy)|\\
\leq&K_1s^{(j+1)(1+\b)}|x-y|^{\hat\a}.
\end{array}
$$

Thanks to $(ii)$ of \eqref{approx_affine}, we find $a_\infty\in\R$ and $b_\infty\in\R^n$ such that 
$(a_k,b_k)\to (a_\infty,b_\infty)$ as $k\to\infty$. 
For any $x\in B_1$, we choose $k\in \N$ such that 
$$
2s^{k+1}\leq |x|<2s^k.$$
Since  $(i)$ of \eqref{approx_affine} yields
$$|\hat u(x)-a_k-\la b_k,x\ra|\leq s^{k(1+\b)}\leq \fr{1}{(2s)^{1+\b}}|x|^{1+\b},$$
by sending $k\to\infty$, it follows
$$
|\hat u(x)-a_\infty -\la b_\infty ,x\ra|\leq \fr{1}{(2s)^{1+\b}}|x|^{1+\b}.$$
Therefore, it is standard to establish the H\"older continuity of $Du$ with its exponent $\b$. 
See \cite{Krylov} or \cite{AP} for instance. 
\end{proof}


\subsection{Estimates near the coincidence set}

We next prove that the first derivative of $L^p$-viscosity solutions $u$ of \eqref{eq:1}
is H\"older continuous with exponent $\b_0\wedge \b_1$ near  the coincidence set $C^\pm [u]$, 
where $u$ touches  one of the obstacles.

In what follows, for the $L^p$-viscosity solution $u\in C(\O)$ of 
\eqref{eq:1}, we use the  notation of $\e$-neighborhood of $C^\pm [u]$ 
 for small $\e>0$;
$$
C^\pm_\e [u]:=\{ x\in \O \ | \ \mbox{dist}(x,C^\pm[u])<\e\}.
$$


\begin{lem}\label{lem:C1Holder}
Assume \eqref{Apq2}, \eqref{Af}, \eqref{Amu}, \eqref{UE}, \eqref{Azero},  \eqref{Aobstacles2} and \eqref{Aregob2}. 
Then, for small $\e>0$, there exists $\hat C_0=\hat C_0(\e)>0$  such that 
if $u\in C(\O)$ is an $L^p$-viscosity solution of \eqref{eq:1}, and 
$x_0\in C^-[u]\cap \O_\e$ (resp., $C^+[u]\cap \O_\e$), then it follows that 
$$
|u(x)-u(x_0)-\la D\phi(x_0),x-x_0\ra |\leq \hat C_0r^{1+\b_2}$$
$$
\le( \mbox{resp., } |u(x)-u(x_0)-\la D\psi (x_0),x-x_0\ra |\leq \hat C_0r^{1+\b_2}\ri)
$$
for $x\in B_r (x_0)$. 
In particular, $u$ is differentiable at $x_0$, and 
$$Du(x_0)=D\phi (x_0)\quad (\mbox{resp., }Du(x_0)=D\psi (x_0)).$$ 
\end{lem}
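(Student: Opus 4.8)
The plan is to work locally near a point $x_0\in C^-[u]\cap\O_\e$ (the case $x_0\in C^+[u]\cap\O_\e$ being symmetric) and to transfer the $C^{1,\b}$-regularity of the obstacle $\phi$ to $u$ by a barrier/comparison argument combined with the one-sided information coming from the obstacle problem. First I would record the two elementary facts that are available at $x_0$: since $x_0\in C^-[u]$ we have $u(x_0)=\phi(x_0)$, and by Proposition~\ref{prop:phi_u_psi} we have $u\ge\phi$ in $\O$, so $u-\phi$ attains a local minimum (value $0$) at $x_0$; on the other hand, because $\psi-\phi\ge r_0>0$ in $\O$ by \eqref{Aobstacles2}, on a fixed ball $B_{2\rho}(x_0)\Subset\O$ (with $\rho$ depending only on $\e$ and $r_0$ and the modulus of continuity of $u$ from Lemma~\ref{lem:L}/\ref{lem:B}, say $\rho\le\e/2$) we have $u<\psi$, hence on that ball $u$ is an $L^p$-viscosity subsolution of $F(x,Du,D^2u)-f=0$, and therefore, by \eqref{UE} and \eqref{Azero}, of $\P^-(D^2u)-\mu|Du|-f^-=0$ in $B_{2\rho}(x_0)$.

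The lower bound $u(x)\ge\phi(x)\ge\phi(x_0)+\la D\phi(x_0),x-x_0\ra-C|x-x_0|^{1+\b_1}$ is immediate from $\phi\in C^{1,\b_1}$ and $u\ge\phi$, so the whole content is the upper bound $u(x)\le\phi(x_0)+\la D\phi(x_0),x-x_0\ra+\hat C_0 r^{1+\b_2}$ on $B_r(x_0)$. For this I would build, on each dyadic ball $B_r(x_0)$ with $r\le\rho$, a comparison function of the form
$$
w_r(x):=\phi(x_0)+\la D\phi(x_0),x-x_0\ra+A r^{1+\b_2}+B\,\frac{|x-x_0|^{1+\b_2}-\text{(const)}}{\text{(const)}}-\text{barrier},
$$
more concretely a standard Pucci barrier: let $w(x)=a+\la D\phi(x_0),x-x_0\ra+ K\big(|x-x_0|^{\gamma}-r^{\gamma}\big)$ with a large negative-exponent-type correction chosen so that $\P^-(D^2w)-\mu|Dw|-f^-\ge 0$ fails — actually one wants a \emph{supersolution} of $\P^-(D^2u)-\mu|Du|-f^-=0$ lying above $u$. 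The clean way is: on the annulus/ball, take $w$ to be the $L^p$-viscosity solution (or an explicit supersolution majorant built from the fundamental-solution-type function and the $L^{p\wedge n}$-norm of $f^-$, using the Aleksandrov–Bakelman–Pucci / Escauriaza estimate implicit in Proposition~\ref{prop:WH}) of $\P^-(D^2w)-\mu|Dw|=f^-$ in $B_r(x_0)$ with boundary data $w=\phi(x_0)+\la D\phi(x_0),\cdot-x_0\ra+C_\phi r^{1+\b_1}$ on $\p B_r(x_0)$, where $C_\phi$ is the $C^{1,\b_1}$-constant of $\phi$ so that $w\ge\phi\ge u$ on $\p B_r(x_0)$; then the comparison principle for $\P^-(D^2u)-\mu|Du|-f^-=0$ (valid for $L^p$-viscosity sub/supersolutions under \eqref{Apq}, \eqref{Amu}) gives $u\le w$ in $B_r(x_0)$. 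The point is then to estimate $\sup_{B_{r/2}(x_0)}\big(w-\phi(x_0)-\la D\phi(x_0),\cdot-x_0\ra\big)$: the affine function $\ell(x)=\phi(x_0)+\la D\phi(x_0),x-x_0\ra$ is itself a solution of $\P^-(D^2u)-\mu|Du|-f^-=0$ only up to the error $\mu|D\phi(x_0)|+f^-$, so $w-\ell$ solves an equation with right-hand side of size $\mu|D\phi(x_0)|+f^-\in L^{p\wedge n}$ and boundary data bounded by $C_\phi r^{1+\b_1}$; by the maximum principle / ABP bound $\|w-\ell\|_{L^\infty(B_r(x_0))}\le C_\phi r^{1+\b_1}+C r^{\a_0}\|f^-\|_{L^{p\wedge n}(B_r(x_0))}+C r\,|D\phi(x_0)|\,\|\mu\|_{\dots}$. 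Here I use $p>n$ from \eqref{Apq2}: then $\a_0=2-n/n=1$ is replaced by the correct scaling $r^{2-n/p}=r^{1+\b_0}$ coming from Hölder's inequality on $f\in L^p$, and similarly the $\mu$-term scales like $r^{1-n/q+1}$ which is $\ge r^{1+\b_0}$; all error terms are therefore $O(r^{1+\b_2})$ since $\b_2=\b_0\wedge\b_1$. That yields $u(x)\le\phi(x_0)+\la D\phi(x_0),x-x_0\ra+\hat C_0 r^{1+\b_2}$ on $B_{r}(x_0)$ (after the routine dyadic/covering bookkeeping), which together with the lower bound is exactly the claimed estimate; differentiability of $u$ at $x_0$ with $Du(x_0)=D\phi(x_0)$ follows by dividing by $r$ and letting $r\to0$.

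The main obstacle I anticipate is getting the \emph{scaling exponent} right in the error terms and making sure the constant $\hat C_0$ depends only on $\e$ (and the fixed data $n,\l,\L,p,q,\|\mu\|_{L^q},\|f\|_{L^p}$, the $C^{1,\b_1}$-norms of $\phi,\psi$, $r_0$, and the global modulus of continuity of $u$), not on $u$ itself or on $x_0$ — this requires first fixing $\rho=\rho(\e)$ uniformly so that $u<\psi$ on $B_{2\rho}(x_0)$ for \emph{every} admissible $u$ and every such $x_0$, which is where \eqref{Aobstacles2} (the gap $\psi-\phi\ge r_0$) and the equi-continuity estimates of Section~3 are essential. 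A secondary technical point is that $\mu\notin L^\infty$, so the first-order term $\mu|D\phi(x_0)|$ must be handled via the $L^q$-theory (Proposition~\ref{prop:WH}-type estimates, with $q>n$) rather than absorbed trivially; since $|D\phi(x_0)|$ is bounded by the $C^1$-norm of $\phi$, this term contributes $C\,r^{\,2-n/q}\le C r^{1+\b_2}$ and causes no genuine difficulty once the bookkeeping is done carefully. Everything else is the standard barrier comparison already used, e.g., in the $C^{1,\b}$-estimate literature cited as \cite{Caf,CafCab,S}.
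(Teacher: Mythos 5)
There is a genuine gap — in fact two, both in the comparison step that carries the whole upper bound. First, you reversed the obstacle inequality: by Proposition \ref{prop:phi_u_psi} we have $u\geq \phi$ (you state this yourself at the outset), so your barrier $w$ with boundary data $\phi(x_0)+\la D\phi(x_0),\cdot-x_0\ra+C_\phi r^{1+\b_1}$ satisfies $w\geq\phi$ on $\p B_r(x_0)$, but \emph{not} $w\geq u$ there; a priori $u$ on $\p B_r(x_0)$ is only controlled by its modulus of continuity (size $\o(r)$ or $r^{\hat\a}$), and an $O(r^{1+\b})$ bound for $u-\ell$ on $\p B_r(x_0)$ is precisely what the lemma is supposed to produce, so it cannot be fed into the boundary data. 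Second, your claim that $u$ is an $L^p$-viscosity subsolution of $F(x,Du,D^2u)-f=0$ on the whole ball (because $u<\psi$ there) is false: at points of the contact set $C^-[u]$ — which contains $x_0$ and possibly much of the ball — the term $u-\phi$ vanishes, the $\min$ in \eqref{eq:1} is $\leq 0$ for free, and the definition yields no subsolution information; the subsolution property of $F-f=0$ holds only on $\{u>\phi\}$. With both the boundary domination and the interior subsolution property missing, the comparison principle cannot be applied, and the barrier argument does not close. (Only the supersolution property of $F-f=0$ holds throughout $\{u<\psi\}$, and only the lower-bound half of your argument is sound.)

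The paper's proof is structured exactly to avoid these two issues, and you should compare: it sets $v:=u-\phi(0)-\la D\phi(0),x\ra+Ar^{1+\b_1}\geq 0$, uses that $v$ is a \emph{supersolution} of the extremal equation in $B_{4r}$ (legitimate, since $u<\psi$ there), and applies the weak Harnack inequality (Proposition \ref{prop:WH}); the crucial use of the contact point is that $\inf_{B_{2r}}v\leq v(x_0)=Ar^{1+\b_1}$, which makes $r^{-n/\e_0}\|v\|_{L^{\e_0}(B_{2r})}\lesssim r^{1+\b_2}$. The missing subsolution property is then repaired by the truncation $w:=v\vee A'r^{1+\b_1}$ with $A'>A$ large: wherever the truncation is inactive one has $u>\phi$, so $u$ genuinely solves $F=f$ there and $w$ is a subsolution of $\P^-(D^2u)-\mu|Du|-f^+-|D\phi(0)|\mu=0$ in all of $B_{4r}$; the local maximum principle (Proposition \ref{prop:LMP}) applied to $w$ then converts the small $L^{\e_0}$-norm into the $\sup_{B_r}$ bound of order $r^{1+\b_2}$. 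In short, the information ``$u$ touches $\phi$ at $x_0$'' must be propagated by the weak Harnack/local maximum principle machinery (with the $v\vee A'r^{1+\b_1}$ trick), not by a Dirichlet barrier on $\p B_r(x_0)$, where no smallness of $u-\ell$ is available.
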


\begin{proof}
We consider the case when $x_0\in C^-[u]\cap \O_\e$; $(u-\phi)(x_0)=0$. 
For simplicity of notations, we shall suppose $x_0=0\in C^-[u]\cap\O_\e$. 

Because of \eqref{Aobstacles2}, we choose  small $r>0$ such that
$$
u(x)<\psi (x)\quad \mbox{in }B_{4r}.$$
Hence, setting  $v(x):=u(x)-\phi (0)-\la D\phi (0),x\ra+Ar^{1+\b_1}$ for 
a large $A>0$, 
we observe that   $v$ is a nonnegative $L^p$-viscosity supersolution of 
$$
\P^+(D^2u)+\mu |Du|+f^-+|D\phi (0)|\mu=0\quad\mbox{in }B_{4r}.
$$
In view of Proposition \ref{prop:WH}, there is $\e_0>0$ such that 
$$
r^{-\fr n{\e_0}}\| v\|_{L^{\e_0}(B_{2r})}\leq C\le(\inf_{B_{2r}}v+r^{1+\b_0}\| f^-+\mu \|_{L^n(B_{4r})}\ri)\leq C\le(r^{1+\b_1}+r^{1+\b_0}\ri).
$$
Thus, from our choice of  $\b_2$, we have
\begin{equation}\label{ineq:wH}
r^{-\fr n{\e_0}}\| v\|_{L^\e (B_{2r})}\leq Cr^{1+\b_2}.
\end{equation}

On the other hand, we claim  that 
$w:=v\vee A'r^{1+\b_1}$, where $A'>A$,  is also an $L^p$-viscosity subsolution  of
$$
\P^-(D^2u)-\mu |Du|-f^+-|D\phi (0)|\mu=0\quad\mbox{in }B_{4r}.$$
Indeed, assuming that $w-\xi$ attains its local maximum at 
$z\in B_{4r}$ for $\xi\in W^{2,p}(B_{4r})$, 
we shall conclude the claim. 
In case of  $w(z)=v(z)$,  noting 
$$
u>\phi\quad \mbox{near }z$$
for large $A'>A$, we observe that 
$w$ is an $L^p$-viscosity subsolution of
\begin{equation}\label{eq:sub}
\P^-(D^2u)-\mu |Du|-f^+-|D\phi (0)|\mu=0
\end{equation}
in $B_{\hat r}(z)$ 
for some $\hat r>0$ while in case of $w(z)=A'r^{1+\b_1}$, we immediately see that 
 any constant is an $L^p$-viscosity subsolution of \eqref{eq:sub}.  
 Hence, we verify that $w$ is an $L^p$-viscosity subsolution of \eqref{eq:sub} in 
 $B_{4r}$. 

In view of Proposition \ref{prop:LMP}, 
 with the above $\e_0>0$, we have
$$
\sup_{B_{r}}v\leq C_1\le( r^{-\fr n{\e_0}}\| w\|_{L^{\e_0}(B_{2r})}+r^{1+\b_0}\| f^++\mu\|_{L^p(B_{4r})}\ri),
$$
where $C_1=C_1(\e_0)$ is the constant in Proposition \ref{prop:LMP}. 
This together with  \eqref{ineq:wH} implies 
$$
-Cr^{1+\b_1}\leq u(x)-\phi (0)-\la D\phi (0),x\ra 
\leq Cr^{1+\b_2}\quad \mbox{in }B_{r},
$$
which concludes the proof. 
\end{proof}

Thanks to Lemma \ref{lem:C1Holder} with Proposition \ref{prop:C1Holder}, 
we easily obtain Theorem 2. 12. 
We give a brief proof though it seems standard. 


\begin{proof}[Proof of Theorem \ref{thm:C1Holder1}]
In view of Proposition 5.1, to complete the assertion, 
we may suppose $x,y\in  C^\pm_{2r_1}[u]\cap \O_{2r_1}$.  
Furthermore, by Lemma \ref{lem:C1Holder}, we may  suppose that
$x,y\in C^-_{2r_1}[u]$, and $0<\mbox{dist}(y,C^-[u])\leq 
\mbox{dist}(x,C^-[u])$. 
Choose $\hat x,\hat y\in C^-[u]$ such that $|x-\hat x|=\mbox{dist}(x,C^-[u])$ and $|y-\hat y|=\mbox{dist}(y,C^-[u])$. 
Thus, we have
$$0<|y-\hat y|\leq |x-\hat x| .$$

\ul{Case 1: $|x-y|<\fr 12 |x-\hat x|$.}   
In view of Proposition \ref{prop:C1Holder},  for any $\b\in (0,\b_0\wedge \hat\b)$,  
we easily obtain
$$
|Du(x)-Du(y)|\leq C|x-y|^{\b}.$$

\ul{Case 2: $|x-y|\geq \fr 12 |x-\hat x|\ge \fr 12 |y-\hat y|$.}   
In view of Lemma \ref{lem:C1Holder}, we have 
$$
\begin{array}{rcl}
|Du(x)-Du(y)|&\leq &|Du(x)-Du(\hat x)|+|Du(\hat x)-Du(\hat y)|\\
&&+|Du(\hat y)-Du(y)|\\
&\leq &C(|x-\hat x|^{\b_2}+|y-\hat y|^{\b_2})+C|\hat x-\hat y|^{\b_1},
\end{array}
$$
which is estimated by $C|x-y|^{\b_2}$ in this case.  

Therefore, combining these cases, we obtain the desired estimate. \end{proof}


\begin{thebibliography}{99}

\bibitem{AP}A. Attouchi and M. Parviainen, 
H\"older regularity for the gradient of the inhomogeneous parabolic normalized $p$-Laplacian, Commun. Contemp. Math., {\bf 20} (4), (2018), 1750035,  27 pp. 

\bibitem{BNV} H. Berestycki, L. Nirenberg and S. R. S. Varadhan, The principal eigenvalue and maximum principle for second-order elliptic operators in general domains, Comm. Pure Appl. Math., {\bf 47} (1) (1994), 47-92. 

\bibitem{BS}H. Br\'ez\"is and G. Stampacchia, Sur la r\'egularit\'e de la solution des in\'equations elliptiques, Bull. Soc. Math. France, {\bf 96} (1968), 153-180. 

\bibitem{Caf} L. A. Caffarelli, 
Interior a priori estimates for solutions of fully nonlinear equations, Ann.  Math. (2), {\bf 130} (1), (1989), 189-213. 
  
\bibitem{CafCab} L. A. Caffarelli and X. Cabr\'e, 
Fully Nonlinear Elliptic Equations, Amer. Math. Soc. Colloquium Publ., {\bf 43}, 1995.
  
\bibitem{CCKS} L. A.  Caffarelli, M. G. Crandall, M. Kocan and A. \Swiech, 
On viscosity solutions of fully nonlinear equations with measurable ingredients, Comm. Pure Appl. Math., {\bf 49} (4), (1996), 365--397. 

\bibitem{CLM} L. Codenotti, M. Lewicka and J. Manfredi, Discrete approximations to the double-obstacle problem and optimal stopping of tug-of-war games, Trans. Amer. Math. Soc., {\bf 369} (10), (2017)
 7387-7403. 
  
\bibitem{CIL} M. G. Crandall, H. Ishii and P.-L. Lions, 
User's guide to viscosity solutions of second order partial differential equations, Bull. Amer. Math. Soc., {\bf 27} (1), (1992), 1--67. 



\bibitem{CKLS} M. G. Crandall, M. Kocan, P.-L. Lions and A. \Swiech, 
Existence results for boundary problems for uniformly elliptic and parabolic fully nonlinear equations, 
Electron. J. Differential Equations, {\bf 1999} (24) (1999), 1-22.

\bibitem{DY} M. Dai and and F. Yi, Finite-horizon optimal investment with transaction costs: 
a parabolic double obstacle problem, J. Differential Equations, {\bf 246} (4), (2009), 1445-1469. 


\bibitem{DMV}G. Dal Maso, U. Mosco and M. A. Vivaldi, A pointwise regularity theory for the two-obstacle problem, Acta Math., {\bf 163} (1-2), (1989), 57-107. 

\bibitem{D} L. F. Duque, 
The double obstacle problem on non divergence form, arXiv: 1709.07072v1

\bibitem{Es} L. Escauriaza, $W^{2,n}$ a priori estimates for solutions to fully non-linear equations, Indiana Univ. Math. J., {\bf 42} (2) (1993), 413-423. 

\bibitem{E} L. C. Evans, Classical solutions of the Hamilton-Jacobi-Bellman equations for uniformly elliptic operators, Trans. Amer. Math. Soc., {\bf 275} (1), (1983), 245-255. 

\bibitem{FS} W. H. Fleming and  H. M. Soner, Controlled Markov Processes and Viscosity Solutions, Second Edition, Stochastic Modelling and Applied Probability, {\bf 25}, Springer, 2006. 


\bibitem{F} A. Friedman, Variational Principles and Free-Boundary Problems, 
A Wiley-Interscience Publication, 1982. 


\bibitem{GT}  D. Gilbarg and N. S. Trudinger, 
Elliptic Partial Differential Equations of Second Order, GMW {\bf 224}, Springer-Verlag, Berlin-New York, 1977.


\bibitem{J}
R. Jensen, Boundary regularity for variational inequalities, Indiana Univ. Math. J., {\bf 29} (4), (1980), 495-504. 



\bibitem{KS} D. Kinderlehrer and G. Stampacchia, 
An Introduction to Variational Inequalities and Their Applications,  Pure and Applied Mathematics, {\bf 88},  
Academic Press, New York-London, 1980.



  


\bibitem{KS3}S. Koike and A. \Swiech, 
Weak Harnack inequality for fully nonlinear uniformly elliptic PDE with unbounded ingredients, J. Math. Soc. Japan, {\bf 61} (3), (2009), 723--755. 

\bibitem{KS4}S. Koike and A. \Swiech, 
Local maximum principle for  $L^p$-viscosity solutions of 
 fully nonlinear elliptic PDEs with unbounded coefficients, 
  Comm.  Pure Appl. Anal., {\bf 11} (5) (2012), 1897-1910. 

\bibitem{Krylov} N. V. Krylov, Lectures on Elliptic and Parabolic Equations in 
H\"older Spaces, GSM {\bf 12}, American Mathematical Society, 1996. 




\bibitem{LP}
K.-A. Lee and J. Park, The regularity theory for the double obstacle problem for fully nonlinear operator, arXiv:1805.02806v2.




\bibitem{LPS}
K.-A. Lee, J. Park and H. Shahgholian, The regularity theory for the double obstacle problem, arXiv:1703.06262v1.



\bibitem{L}S. Lenhart, Bellman equations for optimal stopping time problems, 
Indiana Univ. Math. J., {\bf 32} (3), (1983), 363-375. 


\bibitem{LS69}H. Lewy and G. Stampacchia, On the regularity of 
the solution of a variational inequality, Comm. Pure Appl. Math., {\bf 22} (1969), 153-188. 


\bibitem{LS67}J. L. Lions and G. Stampacchia, Variational inequalities, Comm. Pure Appl. Math., {\bf 20} (3) (1967), 493-519. 



\bibitem{Miller} K. Miller, Barriers on cones for uniformly elliptic operators, Ann. Mat. Pura Appl., (4), {\bf 76} (1967), 93-105. 

\bibitem{MR} P. Moosavi and M. Reppen, A review of the double obstacle problem, a degree project, KTH Stockholm, 2011. 

\bibitem{PSU} A. Petrosyan, H. Shahgholian and N. Uraltseva, Regularity of Free Boundaries in Obstacle-Type Problems, GSM {\bf 136}, American  Mathematical Society, 2012. 

\bibitem{S}A. \'{S}wi\c{e}ch, 
$W^{1,p}$-interior estimates for solutions of fully nonlinear, uniformly elliptic equations, Adv. Differential Equations, {\bf 2} (6), (1997), 1005-1027. 


\bibitem{T} S. Tateyama, 
On $L^p$-viscosity solutions of parabolic bilateral obstacle problems with unbounded ingredients, Preprint. 

\bibitem{Te}E. V. Teixeira, Universal moduli of continuity for solutions to fully nonlinear elliptic equations, Arch. Rational Mech. Anal., {\bf 211} (3), (2014), 911-927. 

\bibitem{To} N. Touzi, 
Optimal Stochastic Control, Stochastic Target Problems, and 
Backward SDE, Fields Institute Monographs {\bf 29}, Springer, 2013. 



\end{thebibliography}
\end{document}